\documentclass[final,oneside]{amsart}

\usepackage{inputenc}
\usepackage[english]{babel}
\usepackage{graphicx}
\usepackage{enumitem}
\usepackage{dsfont}

\usepackage{color}

\usepackage[final,activate,verbose=true,auto=true]{microtype}
\usepackage[norelsize,ruled,vlined,commentsnumbered]{algorithm2e}
\usepackage{amsmath, amsfonts,amssymb,amsopn,amscd,amsthm}
\usepackage{mathrsfs}

\usepackage[textheight=235mm,textwidth=160mm,top=20mm,headsep=8mm]{geometry}
\parindent0cm
\parskip3mm

\usepackage{hyperref}

\numberwithin{equation}{section}

\newcommand{\eq}{\begin{equation}}
\newcommand{\qe}{\end{equation}}

\newcommand{\N}{\mathbb{N}}                
\newcommand{\R}{\mathbb{R}}                     

\newcommand{\nnd}{\mathbb{S}^{+}} 
\def\Cs{\mathscr{C}}
\def\Ms{\mathscr{M}}

\def\v{\mathbf{v}}
\def\x{x}
\def\K{\mathcal{X}}
\def\M{\mathbf{M}}
\def\B{\mathbf{B}}
\def\C{\mathbf{C}}
\def\X{\Lambda}
\def\y{\mathbf{y}}


\theoremstyle{plain}
\newtheorem{thm}{Theorem}[section]

\theoremstyle{definition}
\newtheorem{defn}[thm]{Definition}

\theoremstyle{remark}

\begin{document}
\sloppy

\pagestyle{headings} 
\title{D-optimal design for multivariate polynomial regression  via the Christoffel function and semidefinite relaxations} 
\date{Draft of \today}
\keywords{Experimental design; semidefinite programming}

\author{Y. De Castro \and F. Gamboa \and D. Henrion \and R. Hess \and J.-B. Lasserre}
\address{YDC is with the Laboratoire de Math\'ematiques d'Orsay, Univ. Paris-Sud, CNRS, Universit\'e Paris-Saclay, 91405 Orsay, France.}
\email{yohann.decastro@math.u-psud.fr}
\urladdr{www.math.u-psud.fr/$\sim$decastro} 
\address{FG is with the Institut de Math\'ematiques de Toulouse (CNRS UMR 5219), Universit\'e Paul Sabatier, 118 route de Narbonne, 31062 Toulouse, France.}
\email{fabrice.gamboa@math.univ-toulouse.fr}
\urladdr{www.math.univ-toulouse.fr/$\sim$gamboa}
\address{DH is with LAAS-CNRS, Universit\'e de Toulouse, LAAS, 7 avenue du colonel Roche, F-31400 Toulouse, France
and with the Faculty of Electrical Engineering, Czech Technical University in Prague, Technick\'a 2, CZ-16626 Prague, Czech Republic}
\email{henrion@laas.fr}
\urladdr{homepages.laas.fr/henrion}
\address{RH is with LAAS-CNRS, Universit\'e de Toulouse, LAAS, 7 avenue du colonel Roche, F-31400 Toulouse, France.}
\email{rhess@laas.fr}
\address{JBL is with LAAS-CNRS, Universit\'e de Toulouse, LAAS, 7 avenue du colonel Roche, F-31400 Toulouse, France.}
\email{lasserre@laas.fr}
\urladdr{homepages.laas.fr/lasserre}

\begin{abstract}
We present a new approach to the design of D-optimal experiments with multivariate polynomial regressions on compact semi-algebraic design spaces. We apply the moment-sum-of-squares hierarchy of semidefinite programming problems to solve numerically and approximately the optimal design problem. The geometry of the design is recovered with semidefinite programming duality theory and the Christoffel polynomial.
\end{abstract}

\maketitle 


\section{Introduction}

\subsection{Convex design theory}
The optimum experimental designs are computational and theoretical objects that minimize the variance of the best linear unbiased estimators in regression problems. In this frame, the experimenter models the responses $z_1,\ldots,z_N$ of a random \textit{experiment} whose inputs are represented by a vector $\xi_{i}\in\R^{n}$ with respect to known \textit{regression functions} $\varphi_{1},\ldots,\varphi_{p}$, namely
\[
z_{i}=\sum_{j=1}^{p}\theta_{j}\varphi_{j}(\xi_{i})+\varepsilon_{i}\,,\:\:i=1,\ldots;N
\]
where $\theta_1,\ldots,\theta_p$ are unknown parameters that the experimenter wants to estimate, $\varepsilon_{i}$ is some noise and the inputs $\xi_{i}$~are chosen by the experimenter in a \textit{design space}~$\mathcal X\subseteq\R^{n}$. Assume that the inputs $\xi_i$, for~$i=1,\ldots,N$, are chosen within a set of distinct points $x_1, \ldots, x_\ell$ with $\ell \leq N$, and let $n_k$ denote the number of times the particular point $x_k$ occurs among $\xi_1, \ldots, \xi_N$. This would be summarized by 
\eq
\label{eq:defDesignExact}
\zeta:=\left(\begin{array}{ccc}x_{1} & \cdots & x_{\ell} \\ \frac{n_1}{N} & \cdots & \frac{n_{\ell}}{N}\end{array}\right)
\qe
whose first row gives the points in the design space $\mathcal X$ where the inputs parameters have to be taken and the second row indicates the experimenter which proportion of experiments (frequencies) have to be done at these points. The goal of the design of experiment theory is then to assess which input parameters and frequencies the experimenter has to consider. For a given $\zeta$ the standard analysis of the Gaussian linear model shows that the minimal covariance matrix (with respect to Loewner ordering) of unbiased estimators can be expressed in terms of the Moore-Penrose pseudoinverse of the \textit{information matrix} which is defined by
\eq
\label{eq:defInformationMatrix}
M(\zeta):=\sum_{i=1}^{\ell}w_{i}\Phi(x_{i})\Phi^{\top}(x_{i})
\qe
where $\Phi:=(\phi_1,\ldots,\phi_p)$ is the column vector of regression functions.
One major aspect of design of experiment theory seeks to maximize the information matrix over the set of all possible $\zeta$. Notice the Loewner ordering is partial and, in general, there is no greatest element among all possible information matrices $M(\zeta)$. The standard approach is then to consider some statistical criteria, namely \textit{Kiefer's $\phi_{q}$-criteria}~\cite{kiefer1974general}, in order to describe and construct the ``\textit{optimal designs}'' with respect to those criteria. Observe that the information matrix  belongs to $\nnd_{p}$, the space of symmetric nonnegative definite matrices of size $p$, and for all $q\in[-\infty,1]$ define the function
\[
\phi_{q}\,:=\,\left\{\begin{array}{cll}\nnd_{p} & \to & \R \\ M & \mapsto & \phi_{q}(M)\end{array}\right.
\]
where for positive definite matrices $M$ it holds 
\[
\phi_{q}(M):=\left\{\begin{array}{ll}(\frac1p\mathrm{trace}(M^{q}))^{1/q} & \mathrm{if}\ q\neq-\infty,0  \\ \det(M)^{1/p} & \mathrm{if}\ q=0  \\\lambda_{\min}(M) & \mathrm{if}\ q=-\infty\end{array}\right.
\]
and for nonnegative definite matrices $M$ it holds
\[
\phi_{q}(M):=\left\{\begin{array}{ll}(\frac1p\mathrm{trace}(M^{q}))^{1/q} & \mathrm{if}\ q\in(0,1]  \\ 0 & \mathrm{if}\ q\in[-\infty,0].\end{array}\right.
\]
Those criteria are meant to be real valued, positively homogeneous, non constant, upper semi-continuous, isotonic (with respect to the Loewner ordering) and concave functions. Throughout this paper, we restrict ourselves to the $D$-optimality criteria which corresponds to the choice $q=0$.
Other criteria will be studied elsewhere.

In particular, in this paper we search for solutions $\zeta^{\star}$ to the following optimization problem
\eq
\label{eq:defOptimumdesigns}
         \max \log \det M(\zeta)
\qe
where the maximum is taken over all $\zeta$ of the form \eqref{eq:defDesignExact}. Note that the logarithm of the determinant is used instead of $\phi_0$ because of its standard use in semidefinite programming (SDP) as a barrier function for the cone of positive definite matrices.

\subsection{State of the art}
Optimal design is at the heart of statistical planning for inference in the linear model, see for example \cite{BH78}. While the case of discrete input  factors is generally tackled by algebraic and combinatoric arguments (e.g., \cite{Bai2008}), the one of continuous input  factors  often leads  to an optimization problem.  In general, the continuous factors are generated by  a vector $\Phi$ of linearly independent regular functions  on  the  \textit{design space}~$\mathcal X$. 
One way to handle the problem is to focus only on~$\mathcal X$ ignoring the function $\Phi$ and to try to draw the design points {\it filling} the best the set
$\mathcal X$. This is generally done by optimizing a cost function on $\mathcal X^N$ that traduces the way the design points are positioned between each other and/or how they fill the space. Generic examples are the so-called maxmin or minmax criteria (see for example \cite{prozac88} or \cite{wawa97}) and 
the minimum discrepancy designs (see for example~\cite{Liuliu05}). Another point of view\textemdash which is the one developed here\textemdash  relies on the {\it maximization} of the information matrix. Of course, as explained before, the  Loewner order is partial and so the optimization can not stand on this matrix but on one of
its feature. A pioneer paper adopting this point of view is the one of Elfving \cite{Elf52}. In the early 60's, in a series of papers, Kiefer and Wolwofitz shade 
new lights on this kind of methods for experimental design by introducing the equivalence principle and proposing in some cases algorithms to solve the optimization problem, see \cite{kiefer1974general} and references therein. Following the early works of Karlin and Studden (\cite{KarStu66}, \cite{Stud66}), the case of polynomial regression on a compact interval on $\mathbb{R}$ 
has been widely studied.   In this frame, the theory is almost complete an many thing can be said about the optimal solutions for the  design problem (see \cite{Holg93}). Roughly speaking, the optimal design points are related to the zeros of orthogonal polynomials built on an equilibrum measure. We refer to the excelent book of Dette and Studden \cite{dette1997theory} and reference therein for a complete overview on the subject. In the one dimensional frame, other systems of functions $\Phi$ (trigonometric functions or some $T$-system, see~\cite{krein1977markov} for a definition) are studied in a same way in \cite{dette1997theory}, \cite{TaiStu85} and \cite{Imhof01}. In the multidimensional case, even for polynomial systems, very few case of explicit solutions are known. Using tensoring arguments the case of a rectangle is treated in \cite{dette1997theory}. Particular  models  of degree two are studied in \cite{dette2014optimal} and \cite{Prozac94}. Away from these particular cases, the construction of the optimal design relies on numerical optimization procedures. The case of the determinant ($D$-optimality) is studied  for example in \cite{Wyn70} and \cite{maxdet}. An other criterion based on matrix conditioning is developed in~\cite{marechal2014k}. General optimization algorithm are discussed in \cite{Fedo10} and \cite{Don07}. 
In  the frame of fixed given support points efficient SDP based algorithms are proposed and studied in \cite{Sag11} and \cite{Sag15}.  Let us mention, the paper \cite{maxdet} which is one of the original motivation to develop SDP solvers, especially for Max Det Problems (corresponding to $D$-optimal design) and the so-called problem of analytical centering.

\subsection{Contribution}
For the first time, this paper introduces a general method to compute the approximate $D$-optimal designs on a large variety of design spaces that we referred to as semi-algebraic sets, see~\cite{lasserre} for a definition. This family can be understood as any sets given by intersections and complements of the level sets of multivariate polynomials. The theoretical guarantees are given by Theorems~\ref{th-sdp} and \ref{th3-asymptotics}.  We apply the moment-sum-of-squares hierarchy (a.k.a. the Lasserre hierarchy) of SDP problems to solve numerically and approximately the optimal design problem
They show the convergence of our method towards the optimal information matrix as the order of the hierarchy increases. Furthermore, we show that our method recovers the optimal design when finite convergence of this hierarchy occurs. To recover the geometry of the design we use SDP duality theory and the Christoffel polynomial.
We have run several numerical experiments for which finite convergence holds leading to a surprisingly fast and reliable method to compute optimal designs. As illustrated by our examples, using Christoffel polynomials of degrees higher than two allows to reconstruct designs with points in the interior of the domain, contrasting with the classical use of ellipsoids for linear regressions.

\subsection{Outline of the paper}
In Section \ref{sec:notation}, after introducing necessary notation, we shortly explain some basics on moments and moment matrices, and present the approximation of the moment cone via the Lasserre hierarchy. Section \ref{sec:OptDesign} is dedicated to further describing optimum designs and their approximations. At the end of the section we propose a two step procedure to solve the approximate design problem. Solving the first step is subject to Section \ref{idealProblem}. There, we find a sequence of moments associated with the optimal design measure. Recovering this measure (step two of the procedure) is discussed in Section \ref{recoverMeasure}. We finish the paper with some illustrating examples and a conclusion.

\section{Polynomial optimal design and moments}
\label{sec:notation}

This section collects preliminary material on semialgebraic sets, moments and moment matrices, using the notations of \cite{lasserre}. This material will be used to restrict our attention to \textit{polynomial} optimal design problems with polynomial regression functions and semi-algebraic design spaces.

\subsection{Polynomial optimal design}

Denote by $\R[x]$ the vector space of real polynomials in the variables $x=(x_1,\dotsc,x_n)$, and for $d\in\N$, define $\R[x]_d := \{p \in \R[x] : \deg{p} \leq d\}$ where $\deg p$ denotes the total degree of $p$.

In this paper we assume that the regression functions are multivariate polynomials, i.e. $\Phi=(\phi_1,\ldots,\phi_p) \in (\R[x]_d)^p$. Moreover, we consider that the design space $\mathcal X\subset \R^n$ is a given closed basic semi-algebraic set
\eq
\label{eq:defDesignSpaceSemiAegebraic}
\mathcal X:= \{ x \in \R^m : g_j(x) \geq 0, \:j=1,\ldots,m\}
\qe
for given polynomials $g_j \in {\R}[x]$, $j=1,\ldots,m$, whose degrees are denoted by $d_j$, $j=1,\ldots,m$. Assume that ${\mathcal X}$ is compact with an algebraic certificate of compactness. For example, one of the polynomial inequalities $g_j(x)\geq 0$ should be of the form $R^2-\sum_{i=1}^n x_i^2 \geq 0$ for a sufficiently large constant $R$.

Notice that those assumptions cover a large class of problems in optimal design theory, see for instance \cite[Chapter 5]{dette1997theory}. In particular, observe that the design space $\mathcal X$ defined by~\eqref{eq:defDesignSpaceSemiAegebraic} is not necessarily convex and note that the polynomial regressors $ \Phi$ can handle incomplete $m$-way $d$th degree polynomial regression.

The monomials $x_1^{\alpha_1}\cdots x_n^{\alpha_n}$, with $\alpha =(\alpha_1,\dotsc,\alpha_n) \in \N^n$, form a basis of the vector space $\R[x]$. We use the multi-index notation $x^\alpha:=x_1^{\alpha_1}\cdots x_n^{\alpha_n}$ to denote these monomials. In the same way, for a given $d\in \N$ the vector space $\R[x]_d$ has dimension $\binom{n+d}{n}$ with basis $(x^\alpha)_{|\alpha|\leq d}$, where $|\alpha| := \alpha_1+\cdots+\alpha_n$. We write
\[
	\v_d(x):= (\underbrace{1}_{\text{degree 0}},\underbrace{x_1,\dotsc,x_n}_{\text{degree 1}},\underbrace{x_1^2,x_1x_2,\dotsc,x_1x_n,x_2^2,\dotsc,x_n^2}_{\text{degree 2}},\dotsc,\underbrace{x_1^d,\dotsc,x_n^d}_{\text{degree d}}\ \ )^T
\]
for the column vector of the monomials ordered according to their degree, and where monomials of the same degree are ordered with respect to the lexicographic ordering.

The cone $\Ms_+({\mathcal X})$ of nonnegative Borel measures supported on $\mathcal X$ is understood as the dual to the cone of nonnegative elements of the space $\Cs({\mathcal X})$ of continuous functions on $\mathcal X$.

\subsection{Moments, the moment cone and the moment matrix}
Given $\mu \in \Ms_+(\mathcal{X})$ and $\alpha \in \N^n$, we call
\[
	y_\alpha = \int_\mathcal{X} x^\alpha d\mu
\]
the moment of order $\alpha$ of $\mu$. Accordingly, we call the sequence $\y = (y_\alpha)_{\alpha\in\N^n}$ the moment sequence of~$\mu$. Conversely, we say that a sequence $\y = (y_\alpha)_{\alpha\in\N^n} \subseteq\R$ has a \textit{representing measure}, if there exists a measure~$\mu$ such that $\y$ is its moment sequence.

We denote by $\mathcal{M}_d(\mathcal{X})$ the convex cone of all truncated sequences $\y = (y_\alpha)_{|\alpha|\leq d}$ which have a representing measure supported on $\mathcal{X}$. We call it the \textit{moment cone} of $\mathcal{X}$. It can be expressed as
\eq
\label{eq:momcone}
	\mathcal{M}_d(\mathcal{X})
	\,:=\,\{\y\in\R^{\binom{n+d}{n}}: \exists\,\mu\in \Ms_+(\mathcal{X})\mbox{ s.t. }y_\alpha=\int_\mathcal{X} x^\alpha\,d\mu,\:\:\forall \alpha \in \N^n, \:|\alpha|\leq d\}.
\qe
We also denote by $\mathcal{P}_d(\mathcal{X})$ the convex cone of polynomials of degree at most $d$ that are  nonnegative on $\mathcal X$. When $\mathcal X$ is compact then 
$\mathcal{M}_d(\mathcal{X})=\mathcal{P}_d(\mathcal{X})^\star$
and $\mathcal{P}_d(\mathcal{X})=\mathcal{M}_d(\mathcal{X})^\star$ (see e.g. \cite{lowner}[Lemma 2.5]).

When the design space is given by the univariate interval $\mathcal X=[a,b]$, i.e., $n=1$, then this cone is representable using positive semidefinite Hankel matrices, which implies that convex optimization on this cone can be carried out with efficient interior point algorithms for \textit{semidefinite programming}, see e.g.~\cite{maxdet}. Unfortunately, in the general case, there is no efficient representation of this cone. It has actually been shown in \cite{scheiderer} that the moment cone is not \textit{semidefinite representable}, i.e. it cannot be expressed as the projection of a linear section of the cone of positive semidefinite matrices. However, we can use semidefinite approximations of this cone as discussed in Section \ref{sec:approxMomcon}.

Given a sequence $\y = (y_\alpha)_{\alpha\in\N^n} \subseteq\R$ we define the linear functional $L_\y:\R[x]\to\R$ which maps a polynomial $f=\sum_{\alpha\in\N^n} f_\alpha x^\alpha$ to
\[
	L_\y(f) = \sum_{\alpha \in \N^n} f_\alpha y_\alpha.
\]
A sequence $\y = (y_\alpha)_{\alpha\in\N^n}$ has a representing measure $\mu$ supported on $\mathcal{X}$ if and only if $L_\y(f)\geq 0$ for all polynomials $f\in \R[x]$ nonnegative on $\mathcal{X}$ \cite[Theorem 3.1]{lasserre}.

The \textit{moment matrix} of a truncated sequence $\y = (y_\alpha)_{|\alpha|\leq 2d}$ is the $\binom{n+d}{n}\times\binom{n+d}{n}$-matrix $M_d(\y)$ with rows and columns respectively indexed by $\alpha \in \N^n,  |\alpha|,|\beta| \leq d$ and whose entries are given by
\[
	M_d(\y)(\alpha,\beta) = L_\y(x^\alpha x^\beta) = y_{\alpha+\beta}.
\]
It is symmetric and linear in $\y$, and if $\y$ has a representing measure, then $M_d(\y)$ is \textit{positive semidefinite}, denoted by $M_d(\y) \succcurlyeq 0$.

Similarly, we define the \textit{localizing matrix} of a polynomial $f=\sum_{|\alpha|\leq r} f_\alpha x^\alpha \in \R[x]_r$ of degree $r$ and a sequence $\y = (y_\alpha)_{|\alpha|\leq 2d+r}$ as the $\binom{n+d}{n}\times\binom{n+d}{n}$-matrix $M_d(f\y)$ with rows and columns respectively indexed by $\alpha \in \N^n,  |\alpha|,|\beta| \leq d$ and whose entries are given by
\[
	M_d(f\y)(\alpha,\beta) = L_\y(f(x)\,x^\alpha x^\beta) = \sum_{\gamma\in\N^n} f_\gamma y_{\gamma+\alpha+\beta}.
\]
If $\y$ has a representing measure $\mu$, then $M_d(f\y) \succcurlyeq 0$ for $f\in\R[x]_d$ whenever the support of $\mu$ is contained in the set $\{x\in\R^n: f(x)\geq 0\}$.

Since $\mathcal{X}$ is basic semialgebraic with a certificate of compactness, by Putinar's theorem \cite[Theorem 3.8]{lasserre}, we also know the converse statement in the infinite case, namely $\y = (y_\alpha)_{\alpha\in\N^n}$ has a representing measure $\mu \in\Ms_+(\mathcal{X})$ if and only if for all $d\in\N$ the matrices $M_d(\y)$ and $M_d(g_j\y), \ j=1,\dots,m$, are positive semidefinite.

\subsection{Approximations of the moment cone}\label{sec:approxMomcon}
Letting $v_j:=\lceil d_j/2\rceil$, $j=1,\ldots,m$, for half the degree of the $g_j$, by Putinar's theorem, we can approximate the moment cone $\mathcal{M}_{2d}(\mathcal{X})$ by the following semidefinite representable cones for $\delta\in\N$
\begin{multline*}
	\mathcal{M}_{2(d+\delta)}^{\mathsf{SDP}}(\mathcal{X}):= \{\y\in\R^{\binom{n+2d}{n}} : \exists \y_\delta\in\R^{\binom{n+2(d+\delta)}{n}}\mbox{ such that } (y_{\delta,\alpha})_{|\alpha|\leq 2d}=\y \mbox{ and}\\
	M_{d+\delta}(\y_\delta)\succcurlyeq 0,\ M_{d+\delta-v_j}(g_j\y_\delta)\succcurlyeq 0,\ j=1,\dotsc,m\}.
\end{multline*}
By semidefinite representable we mean that the cones are projections of linear sections of semidefinite cones. Since $\mathcal{M}_{2d}(\mathcal{X})$ is contained in every $\mathcal{M}_{2(d+\delta)}^{\mathsf{SDP}}(\mathcal{X})$, they are outer approximations of the moment cone. Moreover, they form a nested sequence, so we can build the hierarchy
\eq
\label{eq:hierarchy}
	\mathcal{M}_{2d}(\mathcal{X})\subseteq\dots\subseteq\mathcal{M}_{2d+2}^{\mathsf{SDP}}(\mathcal{X})\subseteq\mathcal{M}_{2d+1}^{\mathsf{SDP}}(\mathcal{X})\subseteq\mathcal{M}_{2d}^{\mathsf{SDP}}(\mathcal{X}).
\qe
This hierarchy actually converges, meaning $\mathcal{M}_{2d}(\mathcal{X})=\overline{\bigcap_{\delta=0}^\infty \mathcal{M}_{2d+\delta}^{\mathsf{SDP}}(\mathcal{X})}$, where $\overline{A}$ denotes the topological closure of the set $A$.

\section{Approximate Optimal Design}
\label{sec:OptDesign}

\subsection{Problem reformulation in the multivariate polynomial case}
For all $i=1,\ldots,p$ and $x\in {\mathcal X}$, let $\varphi_{i}(x):=\sum_{|\alpha|\leq d}a_{i,\alpha}x^{\alpha}$ with appropriate $a_{i,\alpha}\in\R$. Define for $\mu\in\Ms_+(\mathcal{X})$ with moment sequence $\y$ the information matrix
\[
M(\mu)=\Big(\int_{{\mathcal X}}\varphi_{i}\varphi_{j}\mathrm d\mu\Big)_{1\leq i,j\leq p}=\Big(\sum_{|\alpha|,|\beta|\leq d}a_{i,\alpha}a_{j,\beta}y_{\alpha+\beta}\Big)_{1\leq i,j\leq p}=\sum_{|\gamma|\leq 2d} A_{\gamma}y_{\gamma}
\]
where we have set $A_{\gamma}:=\Big(\sum_{\alpha+\beta=\gamma}a_{i,\alpha}a_{j,\beta}\Big)_{1\leq i,j\leq p}$ for $|\gamma|\leq 2d$.

Further, let $\mu=\sum_{i=1}^\ell w_i\delta_{x_i}$ where $\delta_x$ denotes the Dirac measure at the point $x\in\mathcal X$ and observe that $M(\mu)=\sum_{i=1}^{\ell}w_{i}\Phi(x_{i})\Phi^{\top}(x_{i})$ as in \eqref{eq:defInformationMatrix}.

The optimization problem
\begin{align}
\label{eq:Optimumdesigns_NLP}
	& \max\log\det M\\
	& \text{s.t. } M=\sum_{|\gamma|\leq 2d}A_{\gamma}y_{\gamma} \succcurlyeq 0,\quad y_{\gamma} = \sum_{i=1}^\ell \frac{n_i}N{x^{\gamma}_i}, \quad \sum_{i=1}^{\ell} n_i = N,\notag\\
& \quad\quad x_i \in  {\mathcal X}, \:n_i \in {\mathbb N}, \:i=1,\ldots,\ell\notag
\end{align}
where the maximization is with respect to $x_i$ and $n_i$, $i=1,\ldots,\ell$, subject to the constraint that the information matrix $M$ is positive semidefinite, is by construction equivalent to the original design problem~\eqref{eq:defOptimumdesigns}. In this form, problem (\ref{eq:Optimumdesigns_NLP}) is difficult because of the integrality constraints on the $n_i$ and the nonlinear relation between $\y$, $x_i$ and $n_i$. We will address these difficulties in the sequel by first relaxing the integrality constraints.

\subsection{Relaxing the integrality constraints}
In problem \ref{eq:Optimumdesigns_NLP}, the set of admissible frequencies $w_i=n_i/N$ is discrete, which makes it a potentially difficult combinatorial optimization problem. A popular solution is then to consider ``\textit{approximate}'' designs defined by 
\eq
\label{eq:defDesignApproximate}
\zeta:=\left(\begin{array}{ccc}x_{1} & \cdots & x_{\ell} \\ w_{1} & \cdots & w_{\ell}\end{array}\right)\,,
\qe
where the frequencies $w_{i}$ belong to the unit simplex ${\mathcal W}:=\{w \in {\mathbb R}^l : 0 \leq w_i \leq 1, \: \sum_{i=1}^\ell w_i=1\}$. Accordingly, any solution to~\eqref{eq:defOptimumdesigns} where the maximum is taken over all matrices of type~\eqref{eq:defDesignApproximate} is called ``\textit{approximate optimal design}'', yielding the following relaxation of problem \ref{eq:Optimumdesigns_NLP}
\begin{align}
\label{eq:ApproxOptimumDesigns_NLP}
	& \max\log\det M\\
	& \text{s.t. } M=\sum_{|\gamma|\leq 2d}A_{\gamma}y_{\gamma}\succcurlyeq 0,\quad y_{\gamma} = \sum_{i=1}^\ell w_i {x^{\gamma}_i},\notag\\
& \quad x_i \in  {\mathcal X}, \:w \in {\mathcal W}\notag
\end{align}
where the maximization is with respect to $x_i$ and $w_i$, $i=1,\ldots,\ell$, subject to the constraint that the information matrix $M$ is positive semidefinite. In this problem, the nonlinear relation between $\y$, $x_i$ and $w_i$ is still an issue.

\subsection{Moment formulation}
Let us introduce a two-step-procedure to solve the approximate optimal design problem~\eqref{eq:ApproxOptimumDesigns_NLP}. For this we will first again reformulate our problem.

Observe that by Carath\'eodory's theorem, the truncated moment cone $\mathcal M_{2d}(\mathcal{X})$ defined in \eqref{eq:momcone} is exactly
\[
\mathcal M_{2d}(\mathcal{X})=\{\y\in\R^{\binom{n+2d}{n}}\ :\  y_{\alpha} = \int_{\mathcal X} x^{\alpha}d\mu, \quad \mu=\sum_{i=1}^\ell w_i\delta_{x_i},\:x_i \in {\mathcal X},\: w\in {\mathcal W}\}\,
\]
so that problem \eqref{eq:defDesignApproximate} is equivalent to
\begin{align}
\label{eq:Step1_Moments}
	&\max\log\det M\\
	& \text{s.t. } M=\sum_{|\gamma|\leq 2d} A_{\gamma} y_{\gamma}\succcurlyeq 0,\notag\\
		&\qquad \y\in\mathcal M_{2d}(\mathcal{X}),\ y_0=1\notag
\end{align}
where the maximization is now with respect to the sequence $\y$. Moment problem \eqref{eq:Step1_Moments} is finite-dimensional and convex, yet the constraint $\y\in\mathcal M_{2d}(\mathcal{X})$ is difficult to handle. We will show that by approximating the truncated moment cone $\mathcal{M}_{2d}(\mathcal{X})$ by a nested sequence of semidefinite representable cones as indicated in \eqref{eq:hierarchy}, we obtain a hierarchy of finite dimensional semidefinite programming problems converging to the optimal solution of \eqref{eq:Step1_Moments}. Since semidefinite programming problems can be solved efficiently, we can compute a numerical solution to problem \eqref{eq:defDesignApproximate}.

This describes step one of our procedure. The result of it is a sequence $\y^\star$ of moments. Consequently, in a second step, we need to find a representing atomic measure $\mu^\star$ of $\y^\star$ in order to identify the approximate optimum design $\zeta^\star$. 

\begin{algorithm}[t]
  \SetAlgoLined
  \KwData{A design space $\mathcal X$ defined by polynomials $g_j, j=1,\ldots,m,$ as in \eqref{eq:defDesignSpaceSemiAegebraic}, and  polynomial regressors $\Phi=(\varphi_{1},\ldots,\varphi_{p})$.}
  \KwResult{An approximate $\phi_{q}$-optimal design $\zeta^\star=\left(\begin{array}{ccc}x_{1}^\star & \cdots & x_{\ell}^\star \\ w_{1}^\star & \cdots & w^\star_{\ell}\end{array}\right)$ for $q=-\infty,-1,0$ or $1$.}
    \BlankLine
    \begin{enumerate}
    \item Find the truncated moments $(y^\star_{\alpha})_{|\alpha|\leq 2d}$ that maximizes \eqref{eq:Step1_Moments}.
 \item Find a representing measure $\mu^\star=\sum_{i=1}^\ell w_i^\star\delta_{x_i^\star}\geq0$ of $(y^\star_{\alpha})_{|\alpha|\leq 2d}$, for $x_i^\star \in {\mathcal X}$ and $w_i^\star \in {\mathcal W}$.
\end{enumerate}
  \caption{Approximate optimal design.}
\label{alg:GeneralAlgApproxDesign}
\end{algorithm}

\section{The ideal problem on moments and its approximation}
\label{idealProblem}

For notational simplicity, let us use the standard monomial basis of $\R[x]_d$ for the regression functions, meaning $\Phi = (\varphi_1,\dotsc,\varphi_p):=(x^\alpha)_{|\alpha|\leq d}$ with $p=\binom{n+d}{n}$. Note that this is not a restriction, since one can get the results for other choices of $\Phi$ by simply performing a change of basis. Different polynomial bases can be considered and one may consult the standard framework described by \cite[Chapter 5.8]{dette1997theory}. For the sake of conciseness, we do not expose the notion of incomplete $q$-way $m$-th degree polynomial regression here but the reader may remark that the strategy developed in this paper can handle such a framework. 

\subsection{Christoffel polynomials}
It turns out that the (unique) optimal solution $\y\in\mathcal{M}_{2d}(\mathcal{X})$ of (\ref{eq:Step1_Moments}) can be characterized 
in terms of the {\it Christoffel polynomial} of degree $2d$ associated with
an optimal measure $\mu$ whose moments up to order $2d$ coincide with $\y$.

\begin{defn}\label{def:christoffel}
Let $\y\in\R^{\binom{n+2d}{n}}$ be such that $\M_d(\y)\succ0$. Then there exists a family of orthonormal polynomials $(P_\alpha)_{\vert\alpha\vert\leq d}\subseteq\R[\x]_d$ satisfying
\[
L_\y(P_\alpha\,P_\beta)\,=\,\delta_{\alpha=\beta}\quad \mbox{and}\quad L_\y(\x^\alpha\,P_\beta)\,=\,0\quad\forall\alpha\prec\beta,
\]
where monomials are ordered with the lexicographical ordering on $\N^n$. We call the polynomial
\[
p_d:\ \x\mapsto p_d(x)\,:=\,\sum_{|\alpha|\leq d}P_\alpha(\x)^2,\quad x\in\R^n,
\]
the \textit{Christoffel polynomial} (of degree $d$) associated with $\y$.
\end{defn}

The Christoffel polynomial\footnote{In fact what is referred to in the literature is
its reciprocal $x\mapsto1/p_d(x)$ called the {\it Chistoffel function}.} can be expressed in different ways. For instance via the inverse of the moment matrix
by
\[
p_d(x) = \v_d(\x)^T\M_d(\y)^{-1}\v_d(\x),\quad\forall x\in\R^n,
\]
or via its extremal property
\[\frac{1}{p_d(\xi)}\,=\,\min_{P\in\R[\x]_{d}}\Big\{\int P(x)^2\,d\mu(x)\::\: P(\xi)=1\,\Big\},\qquad \forall\xi\in\R^n,\]
when $\y$ has a representing measure $\mu$. (When $\y$ does not have a representing measure
$\mu$ just replace $\int P(x)^2d\mu(x)$ with $L_\y(P^2)\,(=P^T\M_d(\y)\,P$)). For more details
the interested reader is referred to \cite{nips} and the references therein.

\subsection{The ideal problem on moments}
The ideal formulation \eqref{eq:Step1_Moments} of our approximate optimal design problem reads
\begin{equation}
\label{sdp-ideal}
\begin{array}{rl}
	\rho=\ \displaystyle\max_{\y} &\log \det \M_d(\y)\\
		 \text{s.t.} &\y\in\mathcal M_{2d}(\mathcal{X}),\ y_0=1.
\end{array}\end{equation}

For this we have the following result

\begin{thm}
\label{th-ideal}
Let $\K\subseteq\R^n$ be compact with nonempty interior.
Problem (\ref{sdp-ideal}) is a convex optimization problem with a unique optimal solution 
$\y^\star\in \mathcal{M}_{2d}(\K)$. It is the vector of moments (up to order $2d$) of a measure $\mu^\star$ supported on at least $\binom{n+d}{n}$ and at most $\binom{n+2d}{n}$ points in the set $\Omega:=\{\x\in\K: \binom{n+d}{n}-p_d^\star(\x)=0\}$ where $\binom{n+d}{n}-p_d^\star\in\mathcal{P}_{2d}(\K)$ and $p^\star_d$ is the Christoffel polynomial
\begin{equation}
\label{christoffel}
\x\mapsto p_d^\star(\x):=\v_d(\x)^T\M_d(\y^\star)^{-1}\v_d(\x).\end{equation}
\end{thm}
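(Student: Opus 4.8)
\emph{Proof plan.} Set $p:=\binom{n+d}{n}$. I would first establish convexity, existence and uniqueness by elementary means, and then read off the geometric description from the first-order optimality conditions, which turn out to be governed by the Christoffel polynomial.

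First I would observe that $\y\mapsto\M_d(\y)$ is linear and injective — its entries recover every $y_\gamma$ with $|\gamma|\le 2d$ — so, $\log\det$ being strictly concave on the cone of positive definite matrices, the objective $\y\mapsto\log\det\M_d(\y)$ is strictly concave on $\{\y:\M_d(\y)\succ0\}$; since $\mathcal{M}_{2d}(\K)$ is convex and the normalisation $y_0=1$ is linear, (\ref{sdp-ideal}) is a convex program with at most one maximiser. Because $\K$ is compact the feasible set is a compact subset of $\R^{\binom{n+2d}{n}}$, and the objective (set to $-\infty$ on singular matrices) is upper semicontinuous there; because $\K$ has nonempty interior, the normalised restriction of Lebesgue measure to a ball contained in $\K$ is feasible with positive definite moment matrix and hence finite objective value. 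Thus the maximum is attained at a unique $\y^\star$, and $\M_d(\y^\star)\succ0$ — otherwise the value would be $-\infty$ — so in particular $p_d^\star$ in (\ref{christoffel}) is well defined.

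Next I would compute the gradient of the objective at $\y^\star$. Using $\partial\M_d(\y)/\partial y_\gamma=B_\gamma$ with $(B_\gamma)_{\alpha\beta}=\mathbf{1}_{\{\alpha+\beta=\gamma\}}$ and the identity $\sum_{|\gamma|\le 2d}B_\gamma\,\x^\gamma=\v_d(\x)\v_d(\x)^T$, one gets
\[
\sum_{|\gamma|\le 2d}\Big(\tfrac{\partial}{\partial y_\gamma}\log\det\M_d(\y^\star)\Big)\,\x^\gamma
=\mathrm{trace}\big(\M_d(\y^\star)^{-1}\v_d(\x)\v_d(\x)^T\big)=p_d^\star(\x),
\]
so the gradient at $\y^\star$ is exactly the coefficient vector of $p_d^\star$; consequently $\langle\nabla\log\det\M_d(\y^\star),\y\rangle=L_\y(p_d^\star)$ for any $\y$, and $L_{\y^\star}(p_d^\star)=\mathrm{trace}(\M_d(\y^\star)^{-1}\M_d(\y^\star))=p$. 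Since the objective is concave and differentiable at $\y^\star$, optimality over the convex feasible set amounts to $\langle\nabla\log\det\M_d(\y^\star),\y-\y^\star\rangle\le0$ for all feasible $\y$ (for feasible $\y$ whose moment matrix is singular, approach $\y^\star$ along the segment $[\y^\star,\y]$, which stays in the domain of the objective), i.e. $L_\y(p_d^\star)\le p$ for all $\y\in\mathcal{M}_{2d}(\K)$ with $y_0=1$. Evaluating at the moment vector of $\delta_\x$, $\x\in\K$, gives $p_d^\star(\x)\le p$ on $\K$; since $\deg p_d^\star\le 2d$, this is precisely $p-p_d^\star\in\mathcal{P}_{2d}(\K)$.

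Finally I would localise the support. Any representing measure $\mu$ of $\y^\star$ on $\K$ satisfies $\int_\K(p-p_d^\star)\,d\mu=p\,\mu(\K)-L_{\y^\star}(p_d^\star)=p-p=0$, and the integrand being nonnegative on $\K$ this forces $p_d^\star=p$ $\mu$-almost everywhere, i.e. $\mathrm{supp}\,\mu\subseteq\Omega$. By Carath\'eodory's theorem — $\mathcal{M}_{2d}(\K)$ being the conic hull in $\R^{\binom{n+2d}{n}}$ of $\{\v_{2d}(\x):\x\in\K\}$ — one may pick a representing measure $\mu^\star=\sum_i w_i\delta_{x_i}$ with at most $\binom{n+2d}{n}$ atoms, each lying in $\Omega$ by the previous sentence. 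And $\M_d(\y^\star)=\sum_i w_i\v_d(x_i)\v_d(x_i)^T$ has rank at most the number of atoms while $\M_d(\y^\star)\succ0$ has rank $p=\binom{n+d}{n}$, so at least $\binom{n+d}{n}$ atoms are needed. I expect the one genuinely delicate step to be the identification of the gradient with the coefficient vector of $p_d^\star$ and the passage from the abstract variational inequality to the pointwise bound $p_d^\star\le p$ on $\K$ (which yields both $p-p_d^\star\in\mathcal{P}_{2d}(\K)$ and the localisation to $\Omega$); strict concavity of $\log\det$, compactness of the feasible set, Carath\'eodory and the rank bound are routine, the only other care being to check $\M_d(\y^\star)\succ0$ so that the first-order condition is both necessary and sufficient.
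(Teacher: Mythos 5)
Your proof is correct and its overall architecture matches the paper's: compactness plus strict concavity for existence and uniqueness, a first-order optimality condition producing the Christoffel polynomial, complementarity to localise the support on $\Omega$, and an atomic representation with the rank bound for the cardinality. The one genuinely different step is how you obtain $\binom{n+d}{n}-p_d^\star\geq 0$ on $\K$. The paper writes the KKT conditions for the conic program, which requires Slater's condition and the dual-cone identification $\mathcal{M}_{2d}(\K)^\star=\mathcal{P}_{2d}(\K)$; the multiplier $p^\star$ is then nonnegative on $\K$ by definition of that cone, and $\lambda^\star=\binom{n+d}{n}$ comes from complementarity. You instead use the bare variational inequality $\langle\nabla\log\det\M_d(\y^\star),\y-\y^\star\rangle\le 0$ (valid for a concave objective at a maximiser over a convex set, with your segment argument correctly handling feasible points where the objective is $-\infty$) and test it against the moment vectors of the Dirac measures $\delta_\x$, $\x\in\K$, to get the pointwise bound $p_d^\star(\x)\le\binom{n+d}{n}$ directly. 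This is more self-contained --- no constraint qualification and no appeal to the duality between $\mathcal{M}_{2d}(\K)$ and $\mathcal{P}_{2d}(\K)$ --- though the paper's KKT formulation is the one that carries over verbatim to the SDP relaxation in Theorem~\ref{th-sdp}. Likewise you use Carath\'eodory's theorem on the conic hull of $\{\v_{2d}(\x):\x\in\K\}$ where the paper invokes Tchakaloff's theorem; these are interchangeable here, and the paper itself uses the Carath\'eodory description of $\mathcal{M}_{2d}(\K)$ in its moment reformulation of the problem. The only place you are terser than you should be is the compactness of the feasible set, which you assert without argument; it follows either from the moment bounds of \cite{lass-netzer} as in the paper, or from weak-$\star$ compactness of the set of probability measures on the compact set $\K$ together with continuity of the truncated moment map.
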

\begin{proof}
First, let us prove that \eqref{sdp-ideal} has an optimal solution. The feasible set is nonempty 
(take as feasible point the vector $\y\in \mathcal{M}_{2d}(\K)$ associated
with the Lebesgue  measure on $\K$, scaled to be a probability measure) with finite associated
objective value, because $\det \M_d(\y)>0$. Hence, $\rho>-\infty$ and in addition
Slater's condition holds because $\y\in{\rm int} \mathcal{M}(\mathcal{X})$ (that is,
$\y$ is a strictly feasible solution to (\ref{sdp-ideal})).

Next, as $\K$ is compact there exists $M>1$ such that $\displaystyle \int_\K x_i^{2d}\,d\mu< M$ 
for every probability measure $\mu$ on~$\K$ and every $i=1,\ldots,n$. Hence, 
$\max\{y_0,\ \max_i\{L_y(x_i^{2d})\}\}<M$ which by \cite{lass-netzer} implies that
$\vert y_\alpha\vert\leq M$ for every $|\alpha|\leq 2d$, which in turn implies
that the feasible set of (\ref{sdp-ideal}) is compact. As the objective function is continuous
and $\rho>-\infty$, problem (\ref{sdp-ideal}) has an optimal solution $\y^\star\in \mathcal{M}_{2d}(\K)$.

Furthermore, an optimal solution $\y^\star\in \mathcal{M}_{2d}(\K)$ is unique because the objective function is 
strictly convex and the feasible set is convex. In addition, since $\rho>-\infty$, 
$\det \M_d(\y^\star)\neq0$ and so $\M_d(\y^\star)$ is non singular.

Next, writing $\B_\alpha$, $\alpha\in\N^n_{2d}$, for the real matrices satisfying $\sum_{|\alpha|\leq 2d}\B_\alpha x^\alpha = \v_d(\x)\v_d(\x)^T$ and $\langle \mathbf{A},\B\rangle= {\rm trace} (\mathbf{A}\B)$ for two matrices $\mathbf{A}$ and $\B$, the necessary {\it Karush-Kuhn-Tucker} optimality conditions\footnote{For the optimization problem $\min\,\{f(x): Ax=b;x\in C\}$ where $f$ is differentiable, $A\in\R^{m\times n}$ and $C\subset\R^n$ is a nonempty closed convex cone, the KKT-optimality conditions at a feasible point $x$ state that there exists $\lambda^\star\in \R^m$ and 
$u\in C^\star$ such that $\nabla f(x)-A^T\lambda^\star=u^\star$ and $\langle x,u^\star\rangle=0$. Slater's condition holds if there exists a feasible solution
$x\in{\rm int}(C)$, in which case the KKT-optimality conditions are necessary and sufficient if $f$ is convex.}
(in short KKT-optimality conditions) state that an optimal solution $\y^\star\in\mathcal{M}_{2d}(\mathcal{X})$ should satisfy
\[\lambda^\star\,e_0-\nabla \log \det \M_d(\y^\star)\,=\,p^\star\in\mathcal{M}_{2d}(\mathcal{X})^\star\:(=\mathcal{P}_{2d}(\mathcal{X})),\]
(where $e_0=(1,0,\ldots0)$ and $\lambda^\star$ is the dual variable associated with the constraint
$y^\star=1$), with the complementarity condition $\langle \y^\star,p^\star\rangle=0$.
That is:
\begin{equation}
\label{a1-ideal}
\left(1_{\alpha=0}\,\lambda^\star_0-\langle\M_{d}(\y^\star)^{-1},\B_\alpha\rangle\right)_{|\alpha|\leq 2d}\,=\,p^\star\,\in\,\mathcal{P}_{2d}(\K);\quad \langle \y^\star,p^\star\rangle =0.\end{equation}
Multiplying (\ref{a1-ideal}) term-wise by $y^\star_\alpha$, summing up and invoking the complementarity condition, yields
\[\lambda^\star_0\,=\,\lambda^\star_0\,\y^\star_0\,=\,\langle\M_{d}(\y^\star)^{-1},\sum_{|\alpha|\leq 2d}y^\star_\alpha\B_\alpha\rangle\,=\,
\langle\M_{d}(\y^\star)^{-1},\M_d(\y^\star)\rangle
\,=\,\binom{n+d}{n}.\]
Similarly, multiplying (\ref{a1-ideal}) term-wise by $\x^\alpha$ and summing up yields
\begin{eqnarray}
\x\mapsto p^\star(\x)
&:=&\binom{n+d}{n}-\langle\M_{d}(\y^\star)^{-1},\sum_{|\alpha|\leq 2d}\B_\alpha\x^\alpha\rangle\notag\\
&=&\binom{n+d}{n}-\langle\M_{d}(\y^\star)^{-1},\v_d(\x)\v_d(\x)^T\rangle\notag\\
&=&\binom{n+d}{n}-\sum_{|\alpha|\leq d}P_\alpha(\x)^2\geq0\quad\forall \x\in\K,\label{eq:kkt_pstar}
\end{eqnarray}
where the $(P_\alpha)$, $\alpha\in\N^n_d$, are the orthonormal polynomials (up to degree $d$) w.r.t. $\mu^\star$, where $\mu^\star$ is a representing measure of $\y^\star$ (recall that $\y^\star\in\mathcal{M}_{2d}(\mathcal{X})$). 
Therefore $p^\star=\binom{n+d}{n}-p^\star_d\in\mathcal{P}_{2d}(\mathcal{X})$ where~$p^\star_d$ is the Christoffel polynomial of degree $2d$ associated with $\mu^\star$.
Next, the complementarity condition $\langle\y^\star,p^\star\rangle=0$ reads
\[
\int_\K \underbrace{p^\star(\x)}_{\geq0\mbox{ on }\K}\,d\mu^\star(\x) = 0
\]
which implies that the support of $\mu^\star$ is included in the algebraic set $\{\x\in\K: p^\star(\x)=0\}$.
Finally, that $\mu^\star$ is an atomic measure supported on at most $\binom{n+2d}{n}$ points follows from Tchakaloff's theorem \cite[Theorem B.12]{lasserre} which states that for every finite Borel probability measure on $\K$ and every $t\in\N$, there exists an atomic measure $\mu_t$ supported on $\ell\leq\binom{n+t}{n}$ points such that all moments of $\mu_t$ and $\mu^\star$ agree up to order $t$. So let $t=2d$. Then $\ell\leq \binom{n+2d}{n}$. If $\ell<\binom{n+d}{n}$, then ${\rm rank} \M_d(\y^\star) <\binom{n+d}{n}$ in contradiction to $\M_d(\y^\star)$ being non-singular. Therefore, $\binom{n+d}{n}\leq \ell\leq \binom{n+2d}{n}$.
\end{proof}
So we obtain a nice characterization of the unique optimal solution $\y^\star$ of (\ref{sdp-ideal}). It is the vector of moments up to order $2d$ of a measure $\mu^\star$ supported on finitely many (at least ${n+d\choose n}$ and at most ${n+2d\choose n}$) points of $\mathcal{X}$. This support of $\mu^\star$ consists of
zeros of the equation $\binom{n+d}{n}-p^\star_d(x)=0$, where $p^\star_d$ is the Christoffel polynomial
associated with $\mu^\star$. Moreover the level set $\{x: p^\star_d(x)\leq {n+d\choose n}\}$ contains $\mathcal{X}$
and intersects $\mathcal{X}$ precisely at the support of $\mu^\star$.

\subsection{The SDP approximation scheme}
Let $\K\subseteq\R^n$ be as defined in \eqref{eq:defDesignSpaceSemiAegebraic}, assumed to be compact.
So with no loss of generality (and possibly after scaling), assume that $\x\mapsto g_1(\x)=1-\Vert\x\Vert^2\geq 0$ is one of the constraints defining $\mathcal{X}$.

Since the ideal moment problem (\ref{sdp-ideal}) involves the moment cone  $\mathcal{M}_{2d}(\mathcal{X})$ which is not SDP representable, we use the hierarchy \eqref{eq:hierarchy} of outer approximations of the moment cone to relax problem (\ref{sdp-ideal}) to an SDP problem. So for a fixed integer $\delta\geq1$ we consider the problem
\begin{equation}
\label{sdp}
\begin{array}{rl}
	\rho_\delta=\ \displaystyle\max_{\y} &\log \det \M_d(\y)\\
		 \text{s.t.} &\y\in\mathcal M_{2(d+\delta)}^{\mathsf{SDP}}(\mathcal{X}),\ y_0=1.
\end{array}\end{equation}
Since (\ref{sdp}) is a relaxation of the ideal problem \eqref{sdp-ideal}, then necessarily $\rho_\delta\geq\rho$ for all $\delta$.
In analogy with Theorem \ref{th-ideal} we have the following result

\begin{thm}
\label{th-sdp}
Let $\K\subseteq\R^n$ as in \eqref{eq:defDesignSpaceSemiAegebraic} be compact and with nonempty interior. Then
\begin{enumerate}[label={\alph*)}]
\item SDP problem \eqref{sdp} has a unique optimal solution $\y^\star_d\in \R^{\binom{n+2d}{n}}$. 
\item The moment matrix $\M_d(\y^\star_d)$ is positive definite. Let $p^\star_d$ be the Christoffel polynomial associated with $\y^\star_d$. Then
$\binom{n+d}{n}-p^\star_d(\x)\geq 0$ for all $\x\in \K$ and $L_{\y^\star_d}(\binom{n+d}{n}-p^\star_d)=0$.
\end{enumerate}
\end{thm}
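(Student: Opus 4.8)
The plan is to reproduce the proof of Theorem~\ref{th-ideal} almost verbatim, the only structural change being that the moment cone $\mathcal{M}_{2d}(\K)$ is replaced by its semidefinite outer approximation $\mathcal{M}_{2(d+\delta)}^{\mathsf{SDP}}(\K)$, and correspondingly the dual cone $\mathcal{P}_{2d}(\K)$ is replaced by the dual of the latter, namely (the coefficient vectors, of degree at most $2(d+\delta)$, of) the truncated quadratic module $\{\sigma_0+\sum_{j=1}^m g_j\sigma_j\}$ with $\sigma_0,\sigma_j$ sums of squares of suitable degrees.

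\emph{Existence and uniqueness (part (a), and positive definiteness in (b)).} First I would exhibit a feasible point of \eqref{sdp}: the vector of moments up to order $2(d+\delta)$ of the Lebesgue measure on $\K$, normalized to a probability measure. Since $\K$ has nonempty interior this measure has \emph{strictly} positive definite moment and localizing matrices, so the point is feasible, indeed strictly feasible, and has finite objective value; hence $\rho_\delta>-\infty$. Next I would argue that the feasible set of \eqref{sdp} is compact: because $g_1(x)=1-\Vert x\Vert^2$ is one of the constraints, the conditions $\M_{d+\delta}(\y_\delta)\succcurlyeq 0$ and $\M_{d+\delta-1}(g_1\y_\delta)\succcurlyeq 0$ together with $y_{\delta,0}=1$ bound every entry of the lifted vector $\y_\delta$ by \cite{lass-netzer}; projecting onto the coordinates $(y_\alpha)_{|\alpha|\leq 2d}$, the feasible set of \eqref{sdp} is closed and bounded, hence compact. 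Since $\y\mapsto\log\det\M_d(\y)$ is upper semicontinuous on this set (with value $-\infty$ at singular moment matrices) it attains its maximum; and because $\rho_\delta\geq\rho>-\infty$, the optimizer $\y^\star_d$ has nonsingular, hence positive definite, moment matrix $\M_d(\y^\star_d)$. Uniqueness is then immediate: the linear map $\y\mapsto\M_d(\y)$ is injective, so $\y\mapsto\log\det\M_d(\y)$ is strictly concave on the convex feasible set.

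\emph{The Christoffel identity (part (b)).} Strict feasibility of the normalized Lebesgue point is Slater's condition for the conic program \eqref{sdp}, so the KKT conditions are necessary and sufficient. Since the objective depends only on $(y_\alpha)_{|\alpha|\leq 2d}$, its gradient is supported on $|\alpha|\leq 2d$ with entries $\langle\M_d(\y^\star_d)^{-1},\B_\alpha\rangle$, and stationarity reads $\lambda^\star e_0-\nabla\log\det\M_d(\y^\star_d)=q^\star$, where $\lambda^\star$ is the multiplier of $y_0=1$, $q^\star$ belongs to the dual cone $\mathcal{M}_{2(d+\delta)}^{\mathsf{SDP}}(\K)^\star$, and the complementarity condition $\langle\y^\star_\delta,q^\star\rangle=0$ holds. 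From the support of the right-hand side, $q^\star$ has degree at most $2d$. Contracting the stationarity identity with $\y^\star_\delta$ and using complementarity gives $\lambda^\star=\langle\M_d(\y^\star_d)^{-1},\M_d(\y^\star_d)\rangle=\binom{n+d}{n}$, exactly as in Theorem~\ref{th-ideal}. Multiplying the $\alpha$-th entry of the stationarity identity by $x^\alpha$ and summing identifies the polynomial associated with $q^\star$ as $x\mapsto\binom{n+d}{n}-\langle\M_d(\y^\star_d)^{-1},\v_d(x)\v_d(x)^T\rangle=\binom{n+d}{n}-p^\star_d(x)$; membership of $q^\star$ in the dual cone forces $\binom{n+d}{n}-p^\star_d\geq 0$ on $\K$, and, since $\deg q^\star\leq 2d$, the complementarity condition becomes $L_{\y^\star_d}(\binom{n+d}{n}-p^\star_d)=0$.

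\emph{Expected main obstacle.} The existence/uniqueness bookkeeping and the term-wise manipulations are routine, being copied from Theorem~\ref{th-ideal}. The two steps that need genuine care are: (i) compactness of the feasible set of the relaxation \eqref{sdp}, which is precisely where the algebraic certificate of compactness (the ball constraint $g_1$) is used, now via \cite{lass-netzer} applied to the lifted sequence $\y_\delta$ rather than to $\y$ itself; and (ii) correctly identifying $\mathcal{M}_{2(d+\delta)}^{\mathsf{SDP}}(\K)^\star$ with the truncated quadratic module and verifying Slater's condition so that KKT applies. The conceptual point to keep in mind---and the reason part (b) is weaker than in Theorem~\ref{th-ideal}---is that $\y^\star_d$ is only a pseudo-moment sequence and in general has no representing measure; hence Tchakaloff's theorem and the orthonormal polynomials of a measure are not available, and the complementarity condition survives only in the ``pseudo-expectation'' form $L_{\y^\star_d}(\binom{n+d}{n}-p^\star_d)=0$, rather than as information about the support of an atomic optimal measure.
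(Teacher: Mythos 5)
Your proposal is correct and follows essentially the same route as the paper's proof: the Lebesgue moment vector for Slater/feasibility, the ball constraint $g_1$ plus \cite{lass-netzer} applied to the lifted sequence for compactness, strict concavity of $\log\det$ for uniqueness, and the KKT stationarity/complementarity conditions contracted first with $\y^\star_\delta$ (giving $\lambda^\star=\binom{n+d}{n}$) and then with $x^\alpha$ (giving the Putinar-type certificate for $\binom{n+d}{n}-p^\star_d$). The only cosmetic difference is that you phrase the dual multiplier abstractly as an element of $\mathcal{M}_{2(d+\delta)}^{\mathsf{SDP}}(\K)^\star$ identified with the truncated quadratic module, whereas the paper writes out the PSD dual matrices $\X_0,\X_j$ and the resulting SOS polynomials $\sigma_j$ explicitly; these are the same argument.
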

\begin{proof}\begin{enumerate}[label={\alph*)}]
\item Let us prove that (\ref{sdp}) has an optimal solution. The feasible set is nonempty, since we can take as feasible point the vector $\tilde{\y}$ associated with the Lebesgue  measure on $\K$, scaled to be a probability measure. Because $\det \M_d(\tilde{\y})>0$, the associated objective value is finite. Hence, Slater's condition holds for (\ref{sdp}) and $\rho_\delta>-\infty$.
\par\medskip\noindent
Next, let $\y$ be an arbitrary feasible solution and $\y_\delta\in\R^{\binom{2(d+\delta)+n}{n}}$ an arbitrary lifting 
of $\y$ (recall the definition of $\mathcal M_{2(d+\delta)}^{\mathsf{SDP}}(\mathcal{X})$). As $g_1(\x)=1-\Vert\x\Vert^2$ and $\M_{d+\delta-1}(g_1\,\y_\delta)\succcurlyeq0$, we have
\[
L_{\y_\delta}(x_i^{2(d+\delta)})\leq 1,\quad i=1,\ldots,n,
\]
and so by \cite{lass-netzer}
\begin{equation}
\label{bound}
\vert y_{\delta,\alpha}\vert \,\leq\,\max\{\underbrace{y_{\delta,0}}_{=1},\ \max_i\{L_{\y_\delta}(x_i^{2(d+\delta)})\}\}\,\leq\,1\qquad\forall|\alpha|\leq 2(d+\delta).
\end{equation}
This implies that the set of feasible liftings $\y_\delta$ is compact which implies that there is an optimal $\y^\star_\delta\in\R^{\binom{2(d+\delta)+n}{n}}$. As a consequence, the subvector $\y^\star_d=(\y^\star_{\delta,\alpha})_{|\alpha|\leq 2d}\in\R^{\binom{2d+n}{n}}$ is an optimal solution to \eqref{sdp}. It is unique due to strict convexity of the objective function.\par\medskip

\item As $\rho_{\delta}>-\infty$, we have $\det \M_d(\y^\star_d)>0$. Now, write $\langle \mathbf{A},\B\rangle= {\rm trace} (\mathbf{A}\B)$ for two matrices $\mathbf{A}$ and $\B$ and let $\B_\alpha,\tilde{\B}_\alpha$ and $\C_{j\alpha}$ be real symmetric matrices such that
\begin{eqnarray*}
\sum_{|\alpha|\leq 2d} \B_\alpha \x^\alpha&=&\v_d(\x)\,\v_d(\x)^T\\
\sum_{|\alpha|\leq 2(d+\delta)} \tilde{\B}_\alpha \x^\alpha&=&\v(\x)_{d+\delta}\,\v_{d+\delta}(\x)^T\\
\sum_{|\alpha|\leq 2(d+\delta)} \C_{j\alpha} \x^\alpha&=&g_j(\x)\,\v_{d+\delta-v_j}(\x)\,\v_{d+\delta-v_j}(\x)^T,\quad j=1,\ldots,m.
\end{eqnarray*}
Problem \eqref{sdp} is a convex optimization problem which can be rewritten as
\[\max_{\y\in\R^{s(2d+2\delta)}}\,\{\log{\rm det} \M_d(\y):\: \M_{d+\delta}(\y)\succcurlyeq 0,\M_{d+\delta-v_j}(g_j\y)\succcurlyeq 0,\ j=1,\dotsc,m\}\]
for which Slater's condition holds. For all its optimal solutions
$\y^\star_\delta=(y^\star_{\delta,\alpha})\in\R^{\binom{2(d+\delta)+n}{n}}$, the restriction $\y^\star_d=(y^\star_{d,\alpha})=(y^\star_{\delta,\alpha})$, $\alpha\in\N^n_{2d}$, is the unique
optimal solution of (\ref{sdp}).  
Hence  at an optimal solution 
$\y^\star$, the necessary KKT-optimality conditions state that
\begin{equation}
\label{a1}
1_{\alpha=0}\,\lambda^\star-\langle\M_{d}(\y^\star_d)^{-1},\B_\alpha\rangle\,=\,\langle \X_0,\tilde{\B}_\alpha\rangle+\sum_{j=1}^m\langle \X_j,\C^j_\alpha\rangle,\quad\forall \alpha\in\N^n_{2d+2\delta},\end{equation}
for some {\it ``dual variables"} $\lambda^\star\in\R$, $\X_j\succcurlyeq0$, $j=0,\ldots,m$. 
We also have the complementarity conditions
\begin{equation}
\label{a3}
\langle \M_{d+\delta}(\y^\star_\delta),\X_0\rangle =0;\quad \langle \M_{d+\delta-v_j}(\y^\star_\delta\,g_j),\X_j\rangle =0,\quad j=1,\ldots,m.
\end{equation}
Multiplying by $y^\star_{\delta,\alpha}$, summing up and using the complementarity conditions \eqref{a3} yields
\begin{equation}
\label{a4}
\lambda^\star-\underbrace{\langle \M_d(\y^\star_d)^{-1},\M_d(\y^\star_d)\rangle}_{=\binom{n+d}{n}}\,=\,\underbrace{\langle \X_0,\M_{d+\delta}(\y^\star_\delta)\rangle}_{=0}
+\sum_{j=1}^m \underbrace{\langle \X_j,\M_{d+\delta-v_j}(g_j\,\y^\star_\delta)\rangle}_{=0},
\end{equation}
and so $\lambda^\star=\binom{n+d}{n}$.\par\medskip\noindent
On the other hand, multiplying by $\x^\alpha$ and summing up yields
\begin{eqnarray}
\nonumber
\lambda^\star-\v_d(\x)^T\M_d(\y^\star_d)^{-1}\v_d(\x)&=&
\langle \X_0,\sum_{|\alpha|\leq 2(d+\delta)}\tilde{\B}_\alpha\,x^\alpha\rangle 
+\sum_{j=1}^m\langle \X_j,\sum_{|\alpha|\leq 2(d+\delta-v_j)}\C^j_\alpha\,x^\alpha\rangle \\
\nonumber&=&\underbrace{\langle \X_0,\v(\x)_{d+\delta}\,\v_{d+\delta}(\x)^T\rangle}_{\sigma_0(x)} 
+\sum_{j=1}^mg_j(x)\,\underbrace{\langle \X_j,\v_{d+\delta-v_j}(\x)\,\v_{d+\delta-v_j}(\x)^T\rangle}_{\sigma_j(x)} \\
\label{a2}
&=&\sigma_0(\x)+\sum_{j=1}^n\sigma_j(\x)\,g_j(\x).
\end{eqnarray}
for some SOS polynomials $(\sigma_j)\subset\R[\x]$, $j=0,\ldots,m$. 
Let $p^\star_d\in\R[\x]_{2d}$ be the Christoffel polynomial associated with $\y^\star_d$. 
Since $\lambda^\star=\binom{n+d}{n}$, \eqref{a2} reads
\begin{equation}
\label{certificate}
\binom{n+d}{n}-p^\star_d(\x)\,=\,\sigma_0(\x)+\sum_{j=1}^n\sigma_j(\x)\,g_j(\x)\,\geq\,0\qquad\forall \x\in\K,
\end{equation}
and \eqref{a4} implies $L_{\y^\star_d}(p^\star_d)=0$.
\end{enumerate}
\end{proof}
\vspace{0.3cm}

Hence, if the optimal  solution $\y^\star_d$ of (\ref{sdp}) is coming from a measure $\mu$ on $\K$, that is $\y^\star_d\in \mathcal{M}_{2d}(\K)$, then $\rho_\delta=\rho$ and $\y^\star_d$ is the unique optimal solution of  \eqref{sdp-ideal}. In addition, by Theorem \ref{th-ideal}, $\mu$ can be chosen to be atomic and supported on at least $\binom{n+d}{n}$ and at most $\binom{n+2d}{n}$ ``contact points" on the set $\{\x\in\K:\binom{n+d}{n}-p^\star_d(\x)=0\}$.

\subsection{Asymptotics}
To analyze what happens when $\delta$ tends to infinity, we denote the optimal solution $\y_d^\star\in\mathcal{M}^{\mathsf{SDP}}_{2(d+\delta)}\subseteq\R^{\binom{n+2d}{n}}$ of \eqref{sdp} by $\y_{d,\delta}^\star$ to indicate that it is the subvector $\y_{d,\delta}^\star=(y_{\delta,\alpha}^\star)_{|\alpha|\leq 2d}$ of a lifting $\y_\delta^\star\in\R^{s(2(d+\delta)}$. Now, we examine the behavior of $(\y^\star_{d,\delta})_{\delta\in\N}$ as $\delta\to\infty$.
\begin{thm}
\label{th3-asymptotics}
For every $\delta=0,1,2,\ldots,$ let $\y^\star_{d,\delta}$ be an optimal solution to \eqref{sdp} and $p^\star_{d,\delta}\in\R[\x]_{2d}$ the Christoffel polynomial associated with $\y^\star_d$ in Theorem \ref{th-sdp}.
Then
\begin{enumerate}[label={\alph*)}]
\item $\rho_\delta\to\rho$ as $\delta\to\infty$, where $\rho$ is the supremum in \eqref{sdp-ideal}.
\item For every $\alpha\in\N^n$ with $|\alpha|\leq 2d$
\begin{equation*}
\label{th3-1}
\lim_{\delta\to\infty}y^\star_{\delta,\alpha}\,=\,y^\star_\alpha\,=\,\int_\K \x^\alpha\,d\mu^\star,
\end{equation*}
where $\y^\star=(y^\star_\alpha)_{|\alpha|\leq 2d}\in \mathcal{M}_{2d}(\K)$ is the unique optimal solution to \eqref{sdp-ideal}.
\item $p^\star_{d,\delta}\to p^\star_d$ as $\delta\to\infty$, where $p^\star_d$ is the Christoffel polynomial associated with $\y^\star$ defined in \eqref{christoffel}.
\item If the dual polynomial $p^\star$ $($given by~\eqref{eq:kkt_pstar}$)$ can be represented as a Sum-Of-Squares $($namely, it satisfies~\eqref{a2}$)$ then $\y^\star_{d,\delta}$ is the unique optimal solution to \eqref{sdp-ideal} and $\y^\star_{d,\delta}$ has a representing measure $($namely the target measure $\zeta^\star)$.
\end{enumerate}
\end{thm}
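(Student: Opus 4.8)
\emph{Proof plan.} The plan is to obtain (a) and (b) simultaneously from a compactness argument on the lifted variables, deduce (c) by continuity of matrix inversion, and derive the finite-convergence statement (d) by recognizing that a Putinar-type SOS certificate for the ideal dual polynomial is precisely a feasible dual solution of the relaxed SDP \eqref{sdp}.

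For (a)--(b), I would first record that $(\rho_\delta)_\delta$ is nonincreasing (the cones $\mathcal M^{\mathsf{SDP}}_{2(d+\delta)}(\K)$ are nested as $\delta$ grows) and bounded below by $\rho$, hence $\rho_\delta\downarrow\rho^\star$ for some $\rho^\star\ge\rho$; it remains to show $\rho^\star\le\rho$. From the proof of Theorem~\ref{th-sdp}, any lifting $\y^\star_\delta$ of the optimal $\y^\star_{d,\delta}$ satisfies the uniform bound $|y^\star_{\delta,\alpha}|\le 1$ for all $|\alpha|\le 2(d+\delta)$, so a diagonal extraction yields a subsequence $\delta_k\to\infty$ and an infinite sequence $\hat\y=(\hat y_\alpha)_{\alpha\in\N^n}$ with $y^\star_{\delta_k,\alpha}\to\hat y_\alpha$ for every $\alpha$ and $|\hat y_\alpha|\le 1$. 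For each fixed $r$, the matrices $\M_r(\y^\star_{\delta_k})$ and $\M_r(g_j\y^\star_{\delta_k})$ are principal submatrices of $\M_{d+\delta_k}(\y^\star_{\delta_k})\succcurlyeq 0$ and $\M_{d+\delta_k-v_j}(g_j\y^\star_{\delta_k})\succcurlyeq 0$ for $k$ large, hence PSD; passing to the limit gives $\M_r(\hat\y)\succcurlyeq 0$ and $\M_r(g_j\hat\y)\succcurlyeq 0$ for all $r$ and $j$, so by Putinar's theorem (the infinite case quoted in Section~\ref{sec:notation}) $\hat\y$ is the moment sequence of some $\hat\mu\in\Ms_+(\K)$. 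Then $\hat\y|_{2d}\in\mathcal M_{2d}(\K)$ with $\hat y_0=1$ is feasible for \eqref{sdp-ideal}, so $\log\det\M_d(\hat\y)\le\rho$; on the other hand $\rho^\star=\lim_k\rho_{\delta_k}=\lim_k\log\det\M_d(\y^\star_{d,\delta_k})\le\log\det\M_d(\hat\y)$ by upper semicontinuity of $\log\det$ on $\nnd_{\binom{n+d}{n}}$ (valued in $[-\infty,\infty)$). Hence $\rho^\star\le\rho$, proving (a); moreover $\log\det\M_d(\hat\y)=\rho>-\infty$ forces $\M_d(\hat\y)$ nonsingular and $\hat\y|_{2d}$ optimal in \eqref{sdp-ideal}, so $\hat\y|_{2d}=\y^\star$ by the uniqueness in Theorem~\ref{th-ideal}. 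Since every accumulation point of the bounded sequence $(\y^\star_{d,\delta})_\delta$ equals $\y^\star$, the whole sequence converges to $\y^\star$, which by Theorem~\ref{th-ideal} is the moment vector of $\mu^\star$; this is (b). Part (c) is then immediate: by (b), $\M_d(\y^\star_{d,\delta})\to\M_d(\y^\star)$, and since $\M_d(\y^\star)$ is nonsingular (Theorem~\ref{th-ideal}), $\M_d(\y^\star_{d,\delta})^{-1}\to\M_d(\y^\star)^{-1}$, so the coefficients of $p^\star_{d,\delta}(\x)=\v_d(\x)^T\M_d(\y^\star_{d,\delta})^{-1}\v_d(\x)$ converge to those of $p^\star_d$ in \eqref{christoffel}.

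For (d), assume $p^\star=\binom{n+d}{n}-p^\star_d$ satisfies \eqref{a2}, i.e. $p^\star=\sigma_0+\sum_{j=1}^m\sigma_j g_j$ with SOS polynomials of the prescribed degrees, and write their Gram matrices $\X_0\succcurlyeq0$, $\X_j\succcurlyeq0$ so that $\sigma_0=\v_{d+\delta}(\x)^T\X_0\v_{d+\delta}(\x)$ and $\sigma_j=\v_{d+\delta-v_j}(\x)^T\X_j\v_{d+\delta-v_j}(\x)$. By Theorem~\ref{th-ideal}, $\y^\star$ has an atomic representing measure $\mu^\star$ on $\K$; let $\y^\star_\delta$ be its moment sequence truncated at order $2(d+\delta)$, which is feasible for the lifted SDP \eqref{sdp}. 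Identifying the coefficients of $\x^\alpha$ in the identity $p^\star=\sigma_0+\sum_j\sigma_j g_j$ reproduces exactly the dual feasibility equations \eqref{a1} with $\lambda^\star=\binom{n+d}{n}$; and for the complementarity conditions \eqref{a3} one computes $\langle\X_0,\M_{d+\delta}(\y^\star_\delta)\rangle=\int_\K\sigma_0\,d\mu^\star\ge0$ and $\langle\X_j,\M_{d+\delta-v_j}(g_j\y^\star_\delta)\rangle=\int_\K\sigma_j g_j\,d\mu^\star\ge0$, whose sum equals $\int_\K p^\star\,d\mu^\star=\langle\y^\star,p^\star\rangle=0$ by the complementarity in Theorem~\ref{th-ideal}, so each term vanishes. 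Since \eqref{sdp} is convex with Slater's condition holding, these KKT conditions are sufficient, hence $\y^\star_\delta$ is optimal for the lifted SDP; therefore $\rho_\delta=\log\det\M_d(\y^\star)=\rho$ and, by the uniqueness in Theorem~\ref{th-sdp}, $\y^\star_{d,\delta}=\y^\star$, which has the representing measure $\mu^\star$ (the target design $\zeta^\star$).

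The main obstacle is the limiting argument in (a): one has to carry the \emph{full} liftings $\y^\star_\delta$, not just their order-$2d$ truncations, through the extraction so that the limit sequence inherits enough positive semidefiniteness (of all moment and localizing matrices) to invoke Putinar's theorem, and one must phrase the final inequality through upper semicontinuity of $\log\det$ rather than plain continuity, since a priori the limiting moment matrix could be singular. Once these points are handled, parts (b)--(d) follow from continuity of elementary maps together with the uniqueness statements already established in Theorems~\ref{th-ideal} and~\ref{th-sdp}.
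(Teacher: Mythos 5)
Your proposal is correct and follows essentially the same route as the paper: uniform boundedness of the liftings, extraction of a pointwise-convergent subsequence, Putinar's theorem to produce a representing measure for the limit, the sandwich $\rho\le\rho_\delta$ to identify the limit with the unique optimizer of \eqref{sdp-ideal}, and continuity of $\M_d(\cdot)^{-1}$ for part (c). Your part (d) is a legitimately worked-out version of what the paper dismisses in one sentence (exhibiting the Gram matrices of the SOS certificate as dual multipliers, checking \eqref{a1} and \eqref{a3}, and invoking sufficiency of KKT under Slater), so no gaps to report.
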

\begin{proof}\begin{enumerate}[label={\alph*)}]
\item For every $\delta$ complete the lifted finite sequence $\y^\star_\delta\in\R^{\binom{n+2(d+\delta)}{n}}$ with zeros to make it an infinite sequence $\y^\star_\delta=(y^\star_{\delta,\alpha})_{\alpha\in\N^n}$. Therefore, every such $\y^\star_\delta$
can be identified with an element of $\ell_\infty$, the Banach 
space of finite bounded sequences equipped with the supremum norm.
Moreover, \eqref{bound} holds for every $\y^\star_\delta$. Thus, denoting by $\mathcal{B}$ the unit ball of $\ell_\infty$ which is compact in the $\sigma(\ell_\infty,\ell_1)$ weak-$\star$ topology on $\ell_\infty$, we have $\y^\star_\delta\in \mathcal{B}$. By Banach-Alaoglu's theorem, there is an element $\hat{\y}\in\mathcal{B}$ and a converging subsequence $(\delta_k)_{k\in\N}$ such that
\begin{equation}
\label{conv}
\lim_{k\to\infty}y^\star_{\delta_k,\alpha}\,=\,\hat{y}_\alpha\qquad \forall \alpha\in\N^n.\end{equation}
Let $s\in \N$ be arbitrary, but fixed. By the convergence \eqref{conv} we also have
\[
\lim_{k\to\infty}\M_s(\y^\star_{\delta_k})\,=\,\M_s(\hat{\y})\succcurlyeq0;\quad\lim_{k\to\infty}\M_s(g_j\,\y^\star_{\delta_k})\,=\,\M_s(g_j\,\hat{\y})\,\succcurlyeq0,\:j=1,\ldots,m.
\]
Notice that the subvectors $\y^\star_{d,\delta}=(y_{d,\delta,\alpha}^\star)_{|\alpha|\leq 2d}$ with $\delta=0,1,2,\ldots$ belong to a compact set. Therefore, since $\det(\M_d(\y^\star_{d,\delta}))>0$ for every $\delta$, we also have $\det(\M_d(\hat{\y}))>0$.\par\medskip\noindent
Next, by Putinar's theorem \cite[Theorem 3.8]{lasserre}, $\hat{\y}$ is the sequence of moments of some measure $\hat{\mu}\in \Ms_+(\K)$, and so $\hat{\y}_d=(\hat{y}_\alpha)_{|\alpha|\leq 2d}$ is a feasible solution to \eqref{sdp-ideal}, meaning $\rho\geq\log\det(\M_d(\hat{\y}_d))$. On the other hand, as \eqref{sdp} is a relaxation of \eqref{sdp-ideal}, we have $\rho\leq\rho_{\delta_k}$ for all $\delta_k$. So the convergence \eqref{conv} yields
 \[\rho\leq\,\lim_{k\to\infty}\rho_{\delta_k}\,=\,\log\det \M_d(\hat{\y}_d),\]
 which proves that $\hat{\y}$ is an optimal solution to \eqref{sdp-ideal}, and $\lim_{\delta\to\infty}\rho_\delta=\rho$.

\item As the optimal solution to \eqref{sdp-ideal} is unique, we have $\y^\star=\hat{\y}_d$, and the whole sequence $(\y^\star_{d,\delta})_{\delta\in\N}$ converges to $\y^\star$, that is
\begin{equation}
\label{conv2}
\lim_{\delta\to\infty}y^\star_{\delta,\alpha}\,=\,\hat{y}_\alpha=y^\star_\alpha\qquad \forall |\alpha|\leq 2d.
\end{equation}

\item Finally, to show (c) it suffices to observe that the coefficients of the orthonormal polynomials
$(P_\alpha)_{|\alpha|\leq d}$ with respect to $\y^\star_d$ are continuous functions of the
moments $(y^\star_{\delta,\alpha})_{|\alpha|\leq 2d}$. Therefore, by the convergence \eqref{conv2} 
one has $p^\star_{d,\delta}\to p^\star_d$ where $p^\star_d\in\R[\x]_{2d}$ as in Theorem \ref{th-ideal}.
\item The last point is direct observing that, in this case, the two programs satisfies the same KKT conditions.
\end{enumerate}
\end{proof}

\section{Recovering the measure}
\label{recoverMeasure}

By solving step one as explained in Section \ref{idealProblem}, we obtain a solution $\y^\star_d$
of SDP problem \eqref{sdp}. However  we do not know if $\y^\star_d$ comes from a measure. This would be the case if we can find an atomic measure having these moments and yielding the same value in problem \eqref{sdp}. For this,
we propose two approaches: A first one which follows a procedure by Nie \cite{nie}, and a second one which uses properties of the Christoffel polynomial associated with $\y^\star_d$.

\subsection{Via the method by Nie}
This approach to recovering a measure from its moments is based on a formulation proposed by Nie in \cite{nie}. 

Let $\y_d^\star=(y^\star_{\delta,\alpha})_{|\alpha|\leq 2d}$ be a solution to \eqref{sdp}.  For $r\in\N$ consider the SDP problem
\begin{equation}
\label{sdp-second}
\begin{array}{rl} \displaystyle\min_{\y\in\R^{s(2d+2r)}} & L_\y(f_r)\\
\mbox{s.t.}& \M_{d+r}(\y)\,\succcurlyeq\,0\\
& \M_{d+r-v_j}(g_j\,\y)\,\succcurlyeq\,0,\quad j=1,\ldots,m\\
& y_{\alpha}=y^\star_{\delta,\alpha},\quad |\alpha|\leq 2d,
\end{array}\end{equation}
where $f_r\in\R[\x]_{2(d+r)}$ is a randomly generated polynomial, strictly positive on $\K$, and again $v_j=\lceil d_j/2\rceil$, $j=1,\ldots,m$. 
Then, we check whether the optimal solution $\y^\star_r$ of (\ref{sdp-second}) satisfies the rank condition
\begin{equation}
\label{test}
{\rm rank}\:\M_{d+r}(\y_r^\star)\,=\,{\rm rank}\:\M_{d+r-v}(\y_r^\star),\end{equation}
where $v:=\max_j v_j$. If the test is passed, then we stop, otherwise we repeat with $r:=r+1$. Using linear algebra, we can also extract the points $x_1^\star,\dotsc,x_r^\star\in\mathcal{X}$ which are the support of the representing atomic measure of $\y_d^\star$.
If $\y^\star_d\in \mathcal{M}_{2d}(\K)$, then with probability one, the rank condition (\ref{test}) will be satisfied for a sufficiently large value of $r$.

Experience reveals that in most of the cases it is enough to use the following polynomial
\[\x\mapsto f_r(\x)=\sum_{|\alpha|\leq d+r}\x^{2\alpha}\]
instead of using a random positive polynomial on $\K$. In problem (\ref{sdp-second}) this corresponds to minimizing the trace of $\M_{d+r}(\y)$ (and so induces an optimal solution
$\y$ with low rank matrix $\M_{d+r}(\y)$).

\subsection{Via Christoffel polynomials}
Another possibility to recover the atomic representing measure of $\y_d^\star$ is to find the zeros of the polynomial $p^\star(x)=\binom{n+d}{n}-p^\star_d(x)$, where $p^\star_d$ is the Christoffel polynomial associated with $\y_d^\star$ on $\K$, that i, the set $\{x\in\mathcal{X}:\binom{n+d}{n}-p_d^\star(x)=0\}$. Due to Theorem \ref{th-sdp} this set is the support of the atomic representing measure.

So we minimize the polynomial $p^\star$ on $\mathcal{X}$ and check whether it vanishes on at least $\binom{n+d}{n}$ (and at most $\binom{n+2d}{n}$) points of the boundary of $\K$. That is,
let $p^\star_d$ be as in Theorem \ref{th-sdp} for some fixed $\delta\in\N$ and solve the SDP problem
\begin{equation}
\label{sdp-three}
\begin{array}{rl} \displaystyle\min_{\y\in\R^{s(2(d+r))}}&L_\y(p^\star)\\
\mbox{s.t.}& \M_{d+r}(\y)\,\succcurlyeq\,0,\quad y_0=1,\\
& \M_{d+r-v_j}(g_j\,\y)\,\succcurlyeq\,0,\quad j=1,\ldots,m.
\end{array}
\end{equation}
Since $p^\star_d$ is associated with the optimal solution to \eqref{sdp} for some given $\delta\in\N$, by Theorem \ref{th-sdp}, it satisfies the Putinar certificate \eqref{certificate} of positivity on $\K$. Thus, the value of problem {\ref{sdp-three} is zero for all $r\geq \delta$. Therefore, for every feasible solution $\y_r$ of \eqref{sdp-three} one has $L_{\y_r}(p^\star)\geq0$ (and $L_{\y^\star_d}(p^\star)=0$ for $\y^\star_d$ an optimal solution of \eqref{sdp}).

Alternatively, we can solve the SDP
\begin{equation}
\label{sdp-four}
\begin{array}{rl} \displaystyle\min_{\y\in\R^{s(2d+2r)}} &{\rm trace}(\M_{d+r}(\y))\\
\mbox{s.t.}& L_\y(p^\star)\,=\,0,\\
&\M_{d+r}(\y)\,\succcurlyeq\,0,\quad y_0=1,\\
& \M_{d+r-v_j}(g_j\,\y)\,\succcurlyeq\,0,\quad j=1,\ldots,m.
\end{array}
\end{equation}
We know that  the value of Problem {\ref{sdp-four}} is not greater than ${\rm trace}(\M_d(\hat{\y}^\star))$ where $\hat{\y}^\star$ is an optimal solution to \eqref{sdp-second} for $r=\delta$, because $\hat{\y}^\star$ is feasible for \eqref{sdp-four}.

\subsection{Calculating the corresponding weights}
\label{weights}
After recovering the support $\{x_1,\dotsc,x_\ell\}$ of the atomic representing measure by one of the previously presented methods, we might be interested in also computing the corresponding weights $\omega_1,\dotsc,\omega_\ell$. These can be calculated easily by solving the following linear system of equations: $\sum_{i=1}^\ell \omega_i x_i^\alpha = y_{d,\alpha}^\star$ for all $|\alpha|\leq d$, i.e. $\int_{\mathcal{X}} x^\alpha\mu^\star(dx) = y^\star_{d,\alpha}$.

\section{Examples}
We illustrate the procedure on three examples: a univariate one, a polygon in the plane and one example on the three-dimensional sphere.

All examples are modeled by GloptiPoly 3 \cite{gloptipoly} and YALMIP \cite{yalmip} and solved by MOSEK 7 \cite{mosek} or SeDuMi under the MATLAB R2014a environment. We ran the experiments on an HP EliteBook 
with 16-GB RAM memory and an Intel Core i5-4300U processor. 
We do not report computation times, since they are negligible for our small examples.

\subsection{Univariate unit interval}\label{expl1}
We consider as design space the interval $\mathcal{X}=[-1,1]$ and on it the polynomial measurements $\sum_{j=0}^d \theta_jx^j$ with unknown parameters $\theta\in\R^{d+1}$. To find the optimal design we first solve problem \eqref{sdp}, in other words
\begin{equation*}
\begin{array}{rl}
	\displaystyle\max_{\y} &\log \det \M_d(\y)\\
	\text{s.t.}& \M_{d+\delta}(\y)\,\succcurlyeq\,0,\\
	&\M_{d+\delta-1}(1-\Vert x\Vert^2)\, \y)\,\succcurlyeq\,0,\\
	&y_0=1.
\end{array}\end{equation*}
for given $d$ and $\delta$. For instance, for $d=5$ and $\delta=0$ we obtain the sequence $\y_d^\star\approx\,$(1, 0, 0.56, 0, 0.45, 0, 0.40, 0, 0.37, 0, 0.36).

Then, to recover the corresponding atomic measure from the sequence $\y^\star_d$ we solve the problem
\begin{equation*}
\begin{array}{rl} \displaystyle\min_\y & {\rm trace}\:\M_{d+r}(\y)\\
\mbox{s.t.}& \M_{d+r}(\y)\,\succcurlyeq\,0\\
& \M_{d+r-1}(1-\Vert x\Vert^2)\,\succcurlyeq\,0,\\
& y_{\alpha}=y^\star_{\delta,\alpha},\quad |\alpha|\leq 2d,
\end{array}\end{equation*}
and find the points -1, -0.765, -0.285, 0.285, 0.765 and 1 (for $d=5$, $\delta$=0, $r=1$). As a result, our optimal design is the Dirac measure supported on these points. These match with the known analytic solution to the problem,  which are the critical points of the Legendre polynomial, see e.g. \cite[Theorem 5.5.3, p.162]{dette1997theory}. Calculating the corresponding weights as described in Section \ref{weights}, we find $\omega_1=\dotsb=\omega_6\approx 0.166$.

Alternatively, we compute the roots of  the polynomial
$x\mapsto p^\star(x)=6-p^\star_5(x)$, where $p^\star_5$ is the Christoffel polynomial of degree $2d=10$ on $\mathcal{X}$ and find the same points as in the previous approach by solving problem \eqref{sdp-four}. See Figure \ref{fig:Ch1} for the graph of the Christoffel polynomial of degree 10.

\begin{figure}[ht]
\includegraphics[scale=.5]{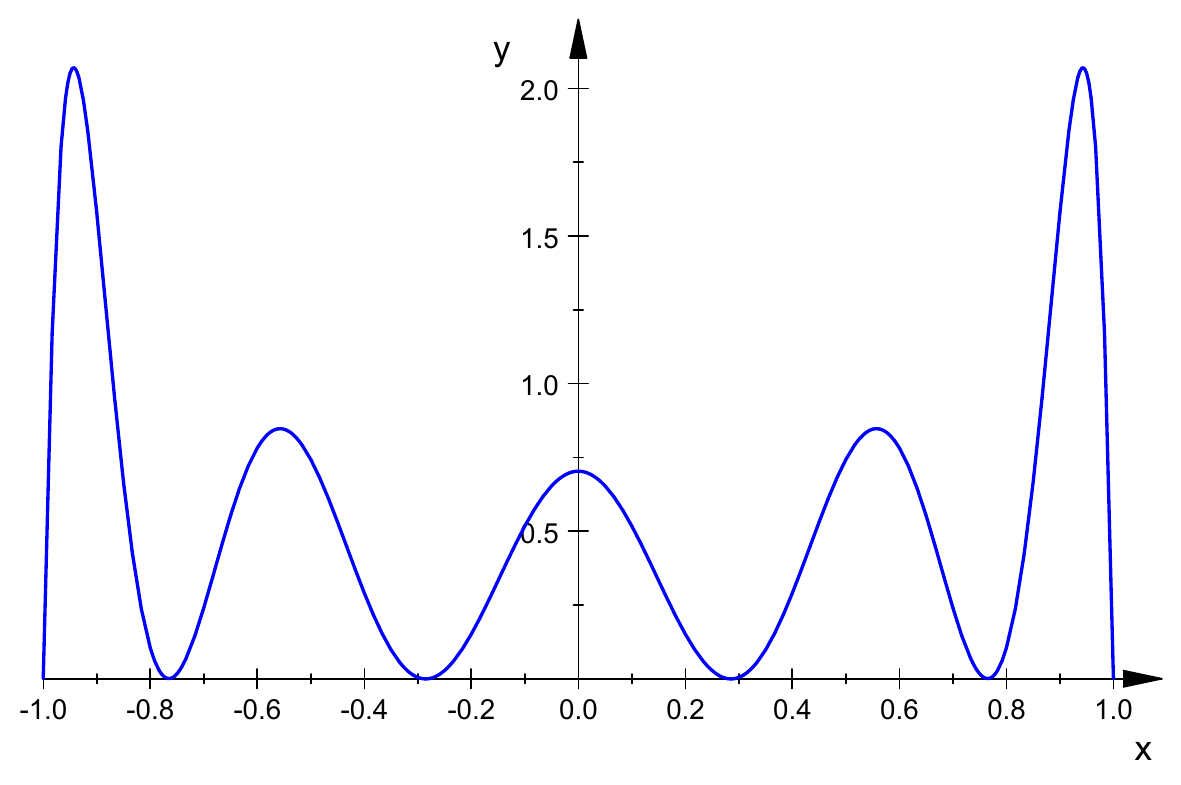}
\caption{Christoffel polynomial for Example \ref{expl1}.}
\label{fig:Ch1}
\end{figure}

We observe that we get less points when using problem \eqref{sdp-three} to recover the support for this example. This may occur due to numerical issues.

\subsection{Wynn's polygon}\label{expl2}
As a two-dimensional example we take the polygon given by the vertices $(-1,-1),\ (-1,1),\ (1,-1)$ and $(2,2)$, scaled to fit the unit circle, i.e. we consider the design space 
\[
\mathcal{X} = \{x\in\R^2 : x_1, x_2 \geq -\tfrac{1}{4}\sqrt{2},\ x_1\leq\tfrac{1}{3}(x_2+\sqrt{2}),\ x_2\leq\tfrac{1}{3}(x_1+\sqrt{2}),\ x_1^2+x_2^2\leq1\}.
\]
Remark that we need the redundant constraint $x_1^2+x_2^2\leq1$ in order to have an algebraic certificate of compactness.

As before, to find the optimal measure for the regression we solve the problems \eqref{sdp} and \eqref{sdp-second}. Let us start by analyzing the results for $d=1$ and $\delta=3$. Solving \eqref{sdp} we obtain $\y^\star\in\R^{45}$ which leads to 4 atoms when solving \eqref{sdp-second} with $r=3$. For the latter the moment matrices of order 2 and 3 both have rank 4, so condition \eqref{test} is fulfilled. As expected, the 4 atoms are exactly the vertices of the polygon.

Again, we could also solve problem \eqref{sdp-four} instead of \eqref{sdp-second} to receive the same atoms. As in the univariate example we get less points when using problem \eqref{sdp-three}. To be precise, GloptiPoly is not able to extract any solutions for this example.

When increasing $d$, we get an optimal measure with a larger support. For $d=2$ we recover 7 points, and 13 for $d=3$. See Figure \ref{fig:wynn} for the polygon, the supporting points of the optimal measure and the 
$\binom{2+d}{2}$-level set of the Christoffel polynomial $p^\star_d$ for different $d$. The latter demonstrates graphically that the set of zeros of $\binom{2+d}{d}-p^\star_d$  intersected with $\mathcal{X}$ are indeed the atoms of our representing measure. In Figure \ref{fig:wynnweights} we visualized the weights corresponding to each point of the support for the different $d$.

\begin{figure}[ht]
\includegraphics[width=.31\textwidth]{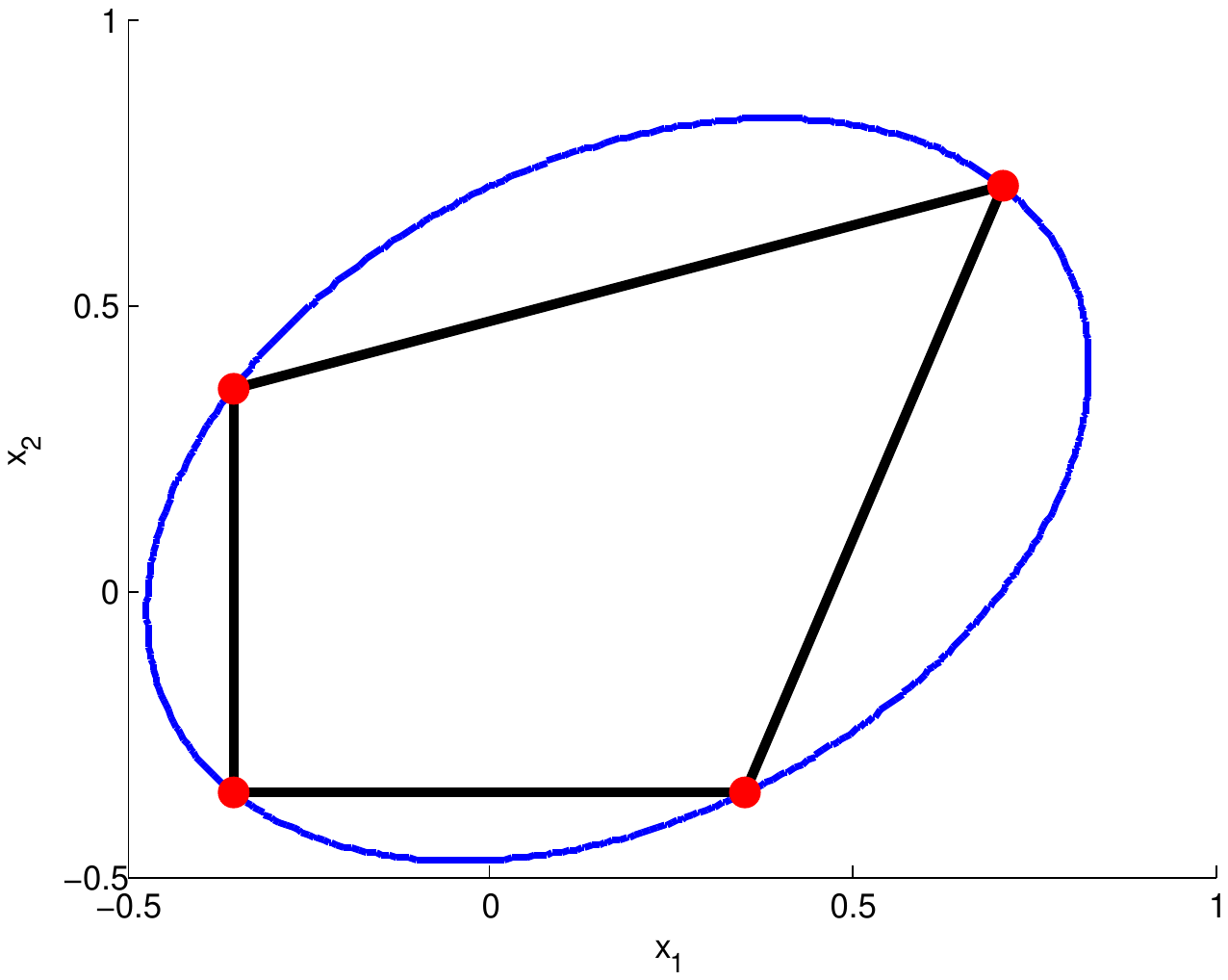}
\includegraphics[width=.31\textwidth]{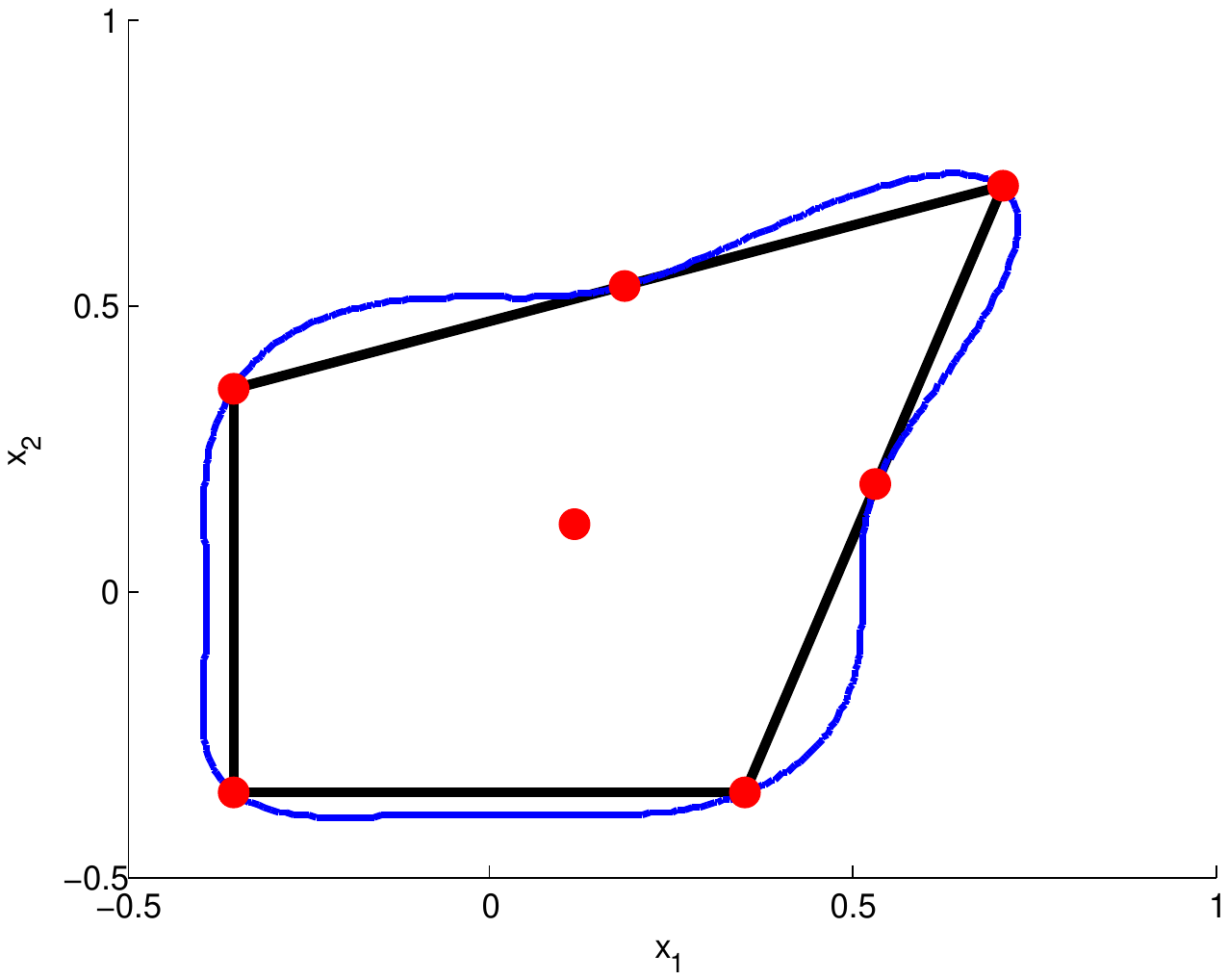}
\includegraphics[width=.31\textwidth]{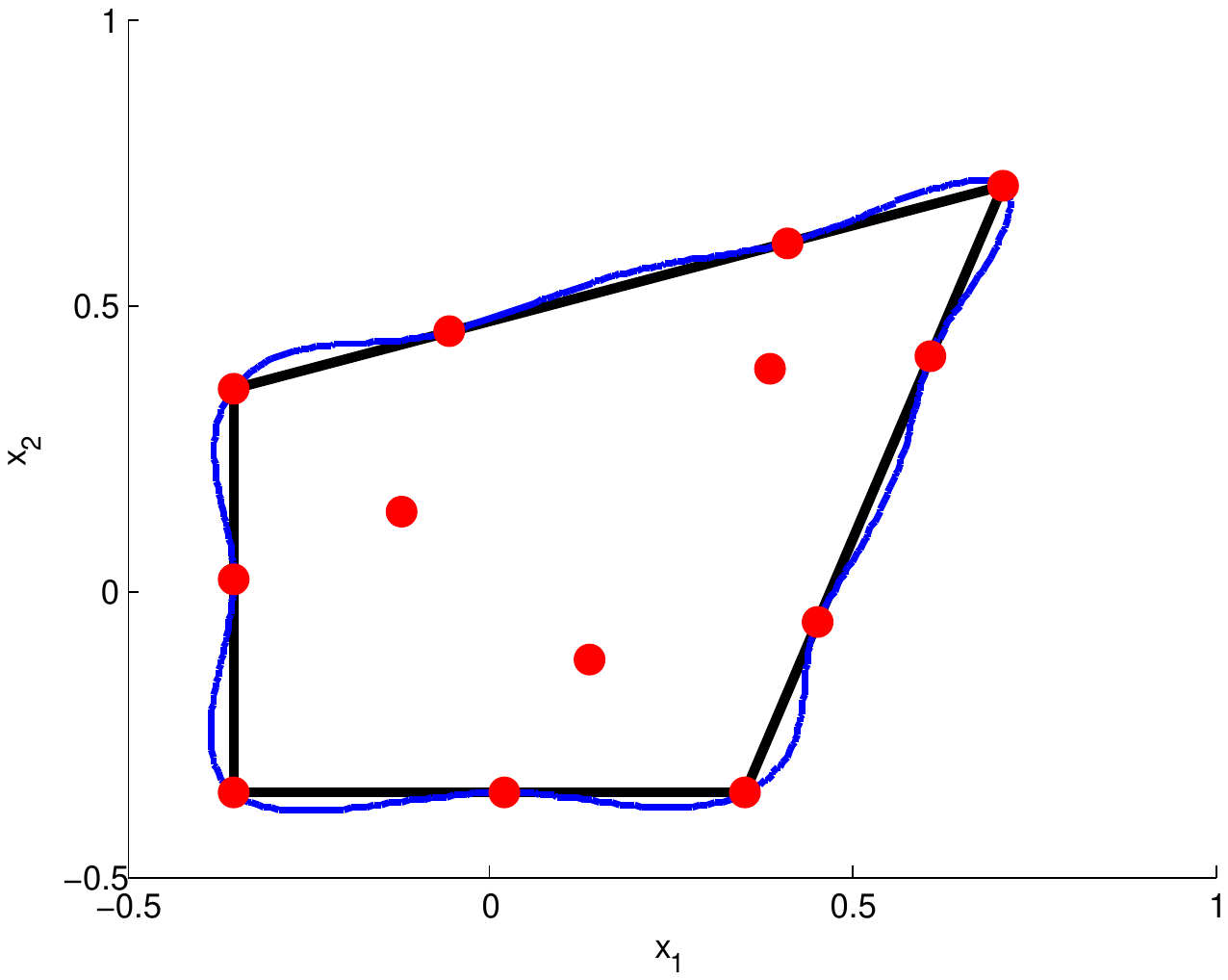}
\caption{The polygon (bold black) of Example \ref{expl2}, the support of the optimal design measure (red points) and the zero level set of the Christoffel polynomial (thin blue) for $d=1$ (left), $d=2$ (middle), $d=3$ (right) and $\delta=3$.}
\label{fig:wynn}
\end{figure}

\begin{figure}[ht]
\includegraphics[width=.31\textwidth]{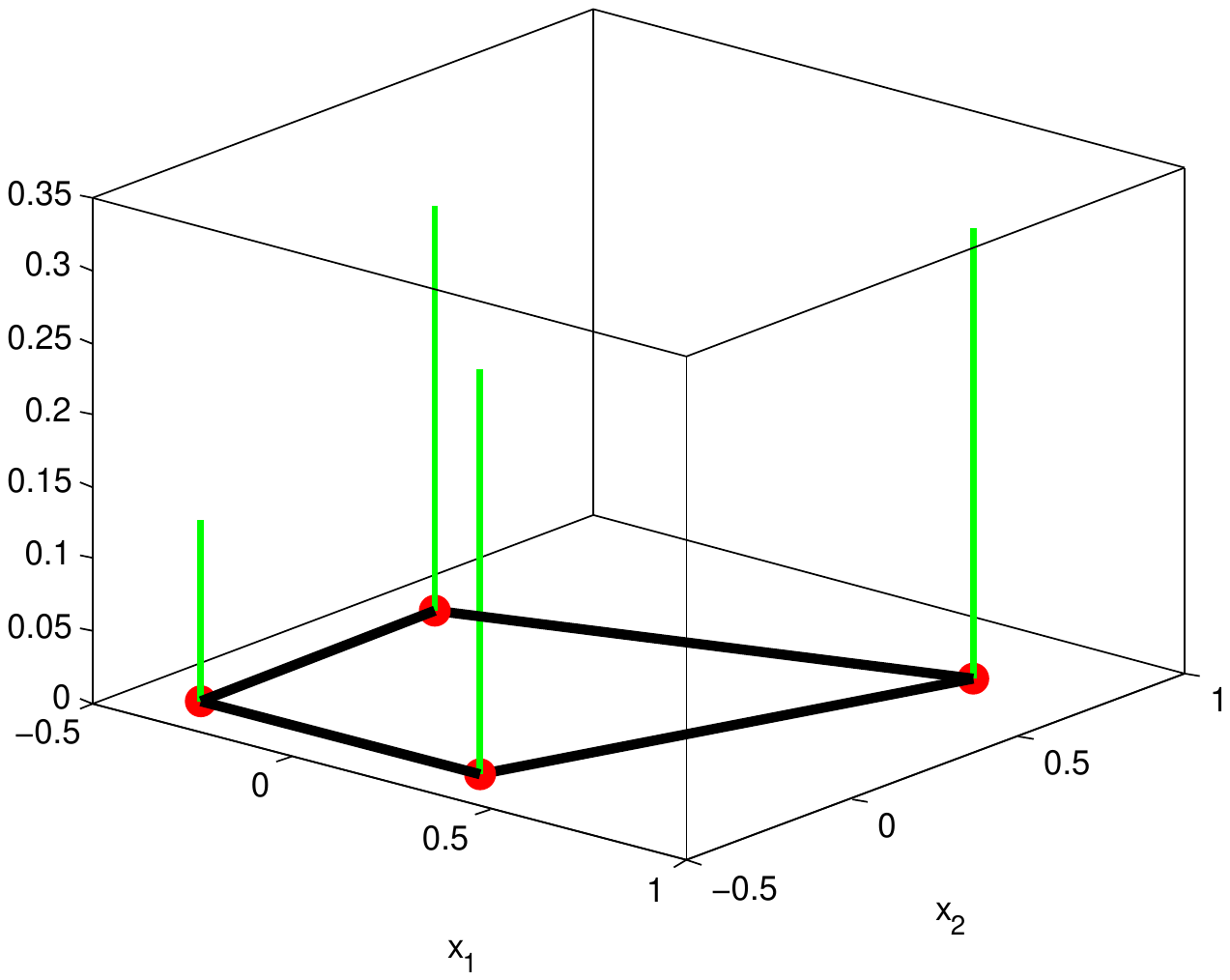}
\includegraphics[width=.31\textwidth]{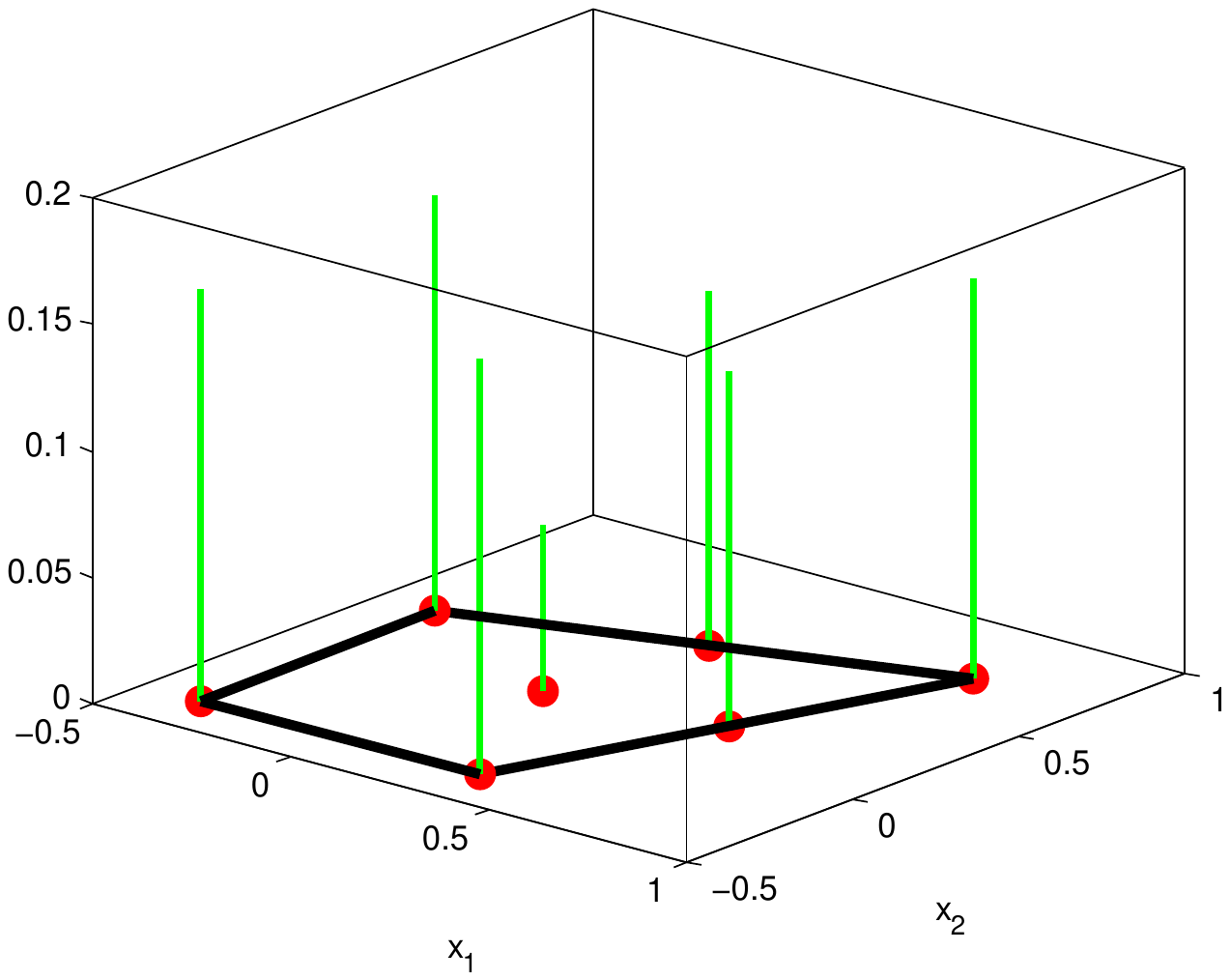}
\includegraphics[width=.31\textwidth]{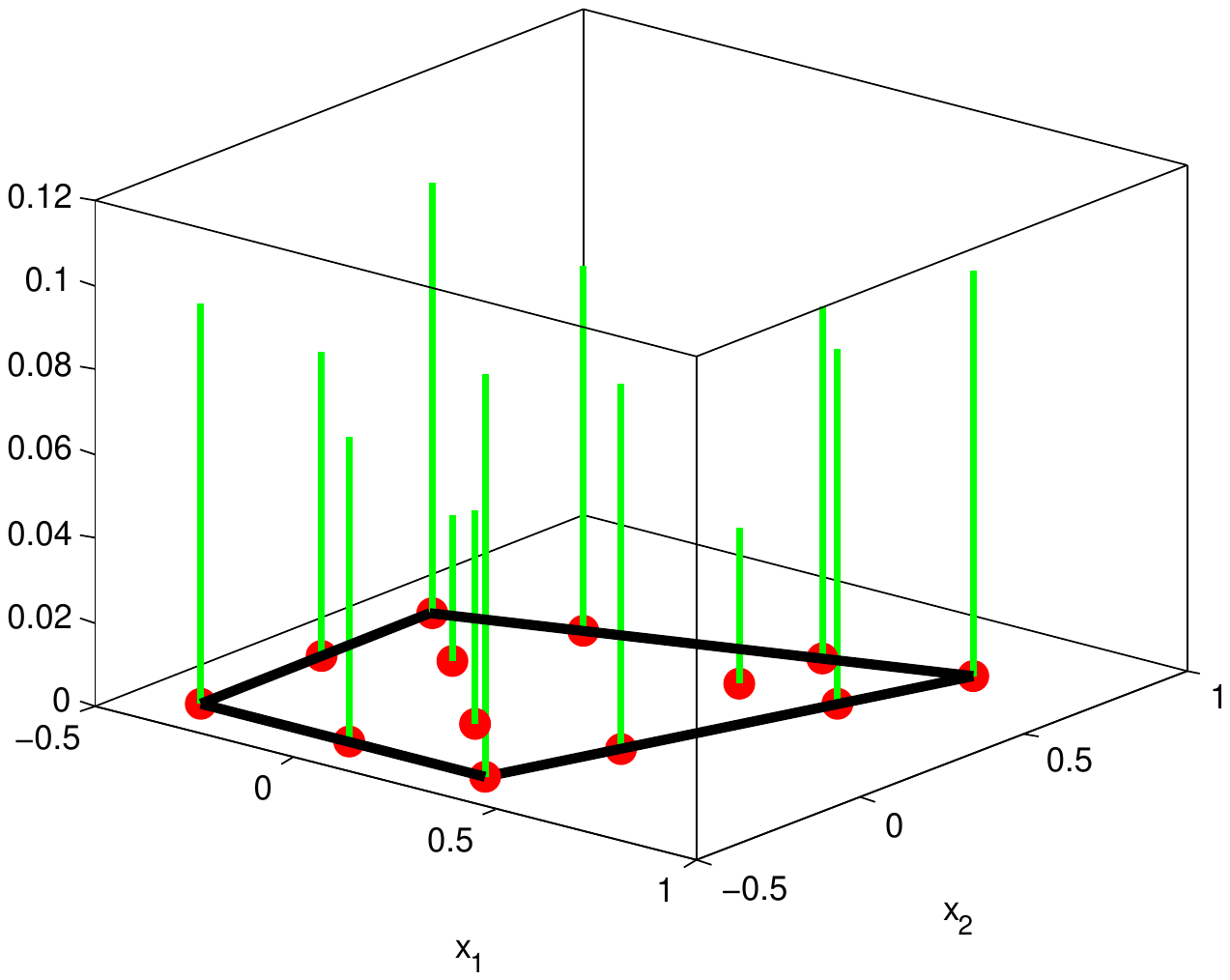}
\caption{The polygon (bold black) of Example \ref{expl2} and the support of the optimal design measure (red points) with the corresponding weights (green bars) for $d=1$ (left), $d=2$ (middle), $d=3$ (right) and $\delta=3$.}
\label{fig:wynnweights}
\end{figure}

\subsection{The 3-dimensional unit sphere}\label{expl3}
Last, let us consider the regression for the degree $d$ polynomial measurements $\sum_{|\alpha|\leq d} \theta_\alpha x^\alpha$ on the unit sphere $\mathcal{X}=\{x\in\R^3: x_1^2+x_2^2+x_3^2=1\}$. As before, we first solve problem \eqref{sdp}. For $d=1$ and $\delta\geq0$ we obtain the sequence $\y^\star_d\in\R^{10}$ with $y_{000}^\star=1,\ y_{200}^\star=y_{020}^\star=y_{002}^\star=0.333$ and all other entries zero.

In the second step we solve problem \eqref{sdp-second} to recover the measure. For $r=2$ the moment matrices of order 2 and 3 both have rank 6, meaning the rank condition \eqref{test} is fulfilled, and we obtain the six atoms $\{(\pm1,0,0),(0,\pm1,0),(0,0,\pm1)\}\subseteq\mathcal{X}$ on which the optimal measure $\mu\in\Ms_+(\mathcal{X})$ is uniformly supported.

For quadratic regressions, i.e. $d=2$, we obtain an optimal measure supported on 14 atoms evenly distributed on the sphere. Choosing $d=3$, meaning cubic regressions, we find a Dirac measure supported on 26 points which again are evenly distributed on the sphere. See Figure \ref{fig:sphere} for an illustration of the supporting points of the optimal measures for $d=1$, $d=2$, $d=3$ and $\delta=0$.

\begin{figure}[ht]
\includegraphics[width=.3\textwidth]{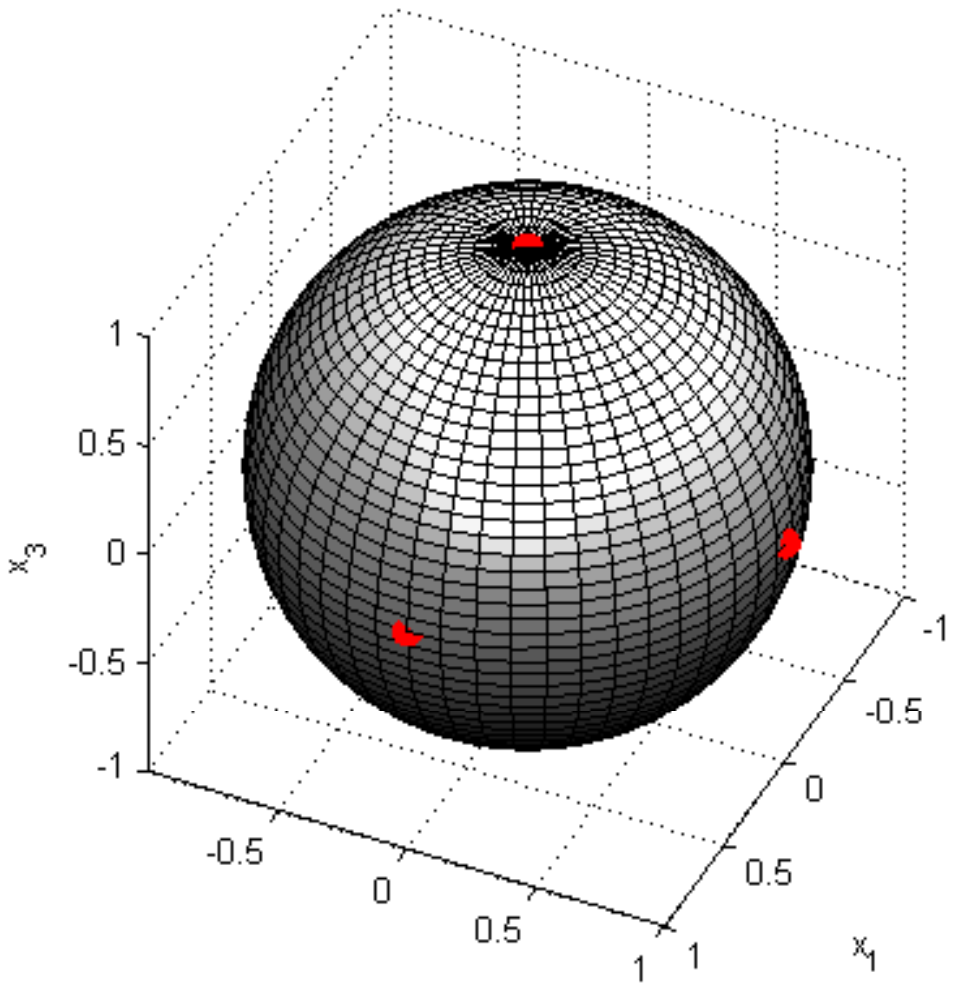}
\includegraphics[width=.3\textwidth]{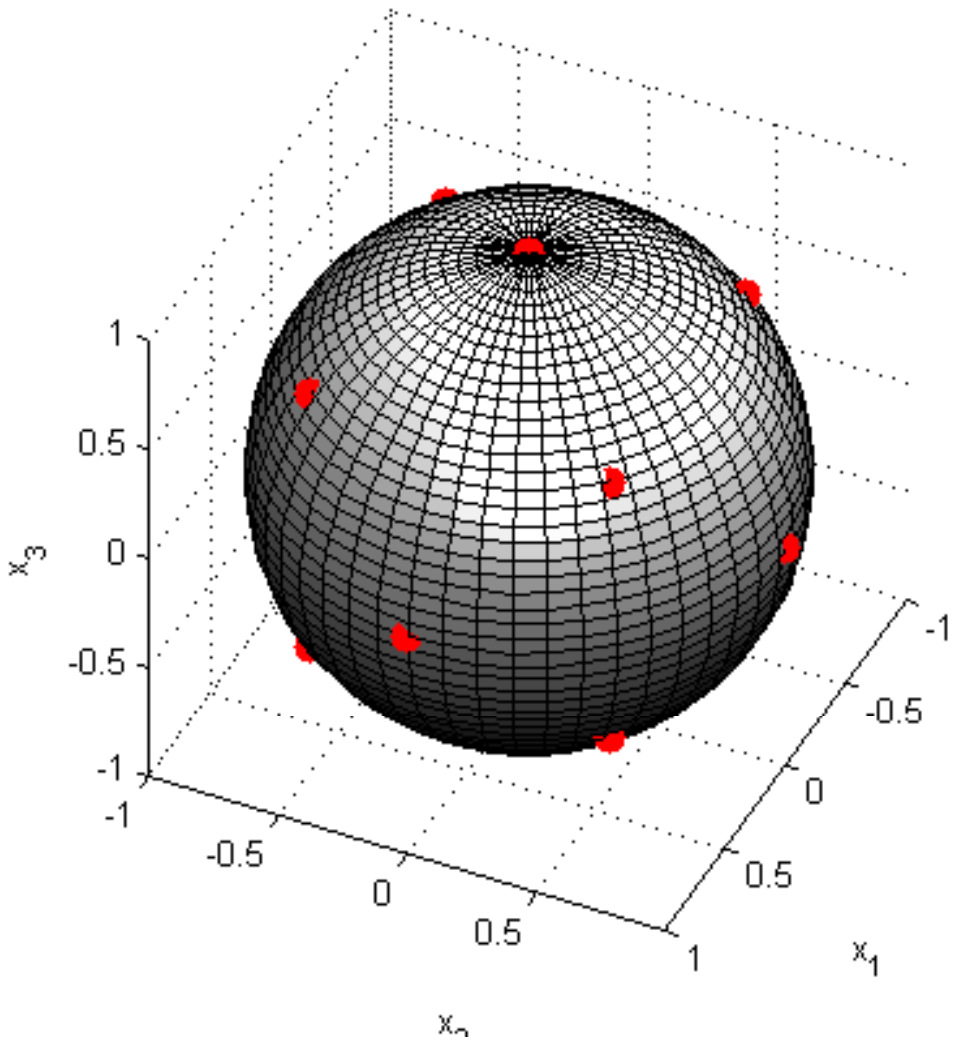}
\includegraphics[width=.3\textwidth]{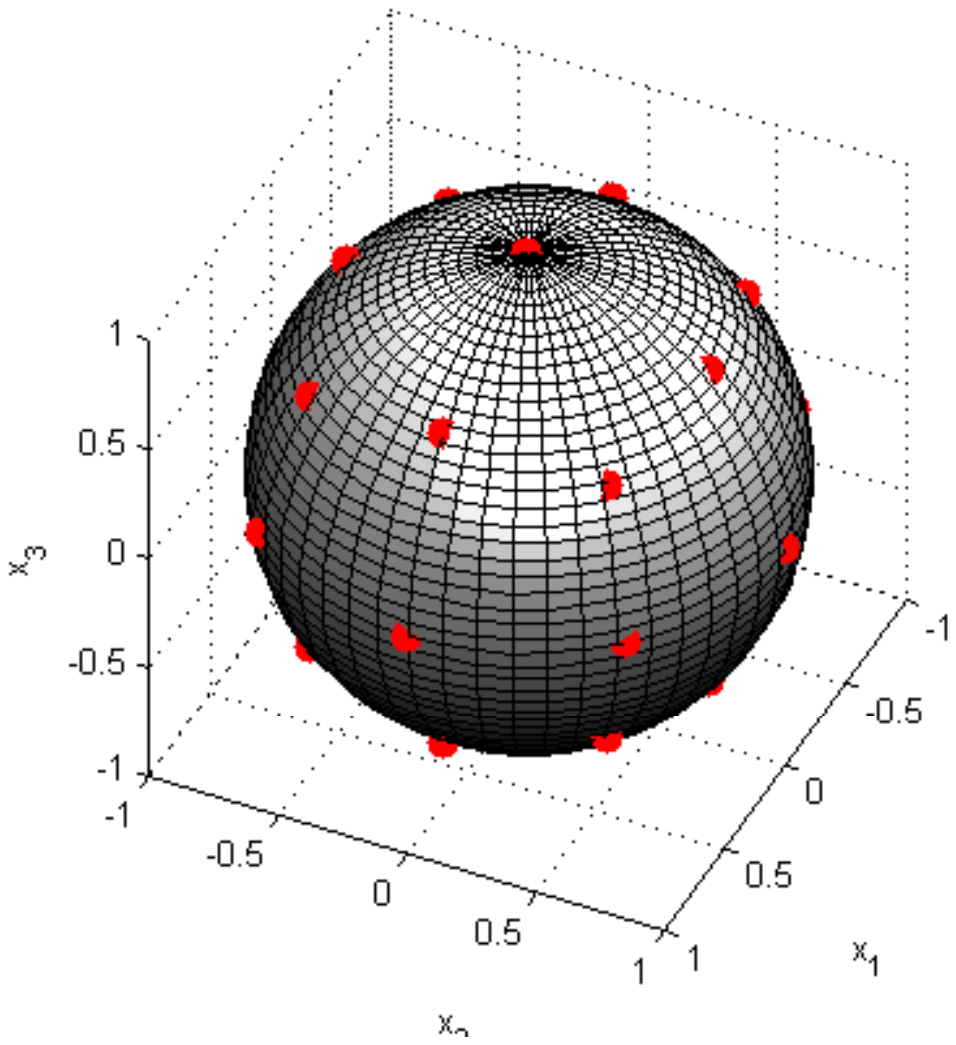}
\caption{The red points illustrate the support of the optimal design measure for $d=1$ (left), $d=2$ (middle), $d=3$ (right) and $\delta=0$ for Example \ref{expl3}.}
\label{fig:sphere}
\end{figure}

Using the method via Christoffel polynomials gives again less points. No solution is extracted when solving problem \eqref{sdp-four} and we find only two supporting points for problem \eqref{sdp-three}.

\subsection*{Acknowledgments}
We thank Henry Wynn for communicating the polygon of Example \ref{expl2} to us.
Feedback from Luc Pronzato, Lieven Vandenberghe and Weng Kee Wong was also appreciated.

\bibliographystyle{amsalpha}

\end{document}